\documentclass[11pt,a4paper]{article}
\usepackage{amsmath}
\usepackage{amsfonts}
\usepackage{amsthm}
\usepackage{amssymb}
\usepackage{url}
\usepackage[usenames,dvips]{color}
\usepackage[dvips]{graphicx}
\usepackage{psfrag, float}  

\vfuzz2pt 
\hfuzz2pt 
\oddsidemargin=3mm \evensidemargin=3mm \topmargin=-5mm
\textheight=225mm \textwidth=160mm

\newtheorem{lemma}{Lemma}[section]
\newtheorem{corollary}[lemma]{Corollary}
\newtheorem{definition}[lemma]{Definition}
\newtheorem{remark}[lemma]{Remark}

\newtheorem{proposition}[lemma]{Proposition}
\newtheorem{theorem}[lemma]{Theorem}

\def\be{\begin{eqnarray}}
\def\ee{\end{eqnarray}}
\def\beal{\begin{aligned}}
\def\enal{\end{aligned}}
\newcommand{\eps}{\varepsilon}
\newcommand{\ga}{\gamma}

\newcommand{\RR}{\mathbb{R}}

\newcommand{\CC}{\mathbb{C}}
\newcommand{\TT}{\mathbb{T}}

\newcommand{\ZZ}{\mathbb{Z}}
\newcommand{\MM}{\mathcal{M}}

\newcommand{\GG}{\mathcal{G}}
\newcommand{\II}{\mathcal{I}}
\newcommand{\BB}{\mathcal{B}}

\newcommand{\KK}{\mathcal{K}}
\newcommand{\KKK}{\mathbb{K}}

\newcommand{\XX}{\mathcal{X}}
\newcommand{\OO}{\mathcal{O}}

\newcommand{\HH}{\mathcal{H}}

\newcommand{\RRR}{\mathcal{R}}

\newcommand{\EE}{\mathcal{E}}
\newcommand{\JJ}{\mathcal{J}}
\newcommand{\AAA}{\mathcal{A}}
\newcommand{\ZZZ}{\mathcal{Z}}
\newcommand{\CCC}{\mathcal{C}}

\newcommand{\Id}{\mathrm{Id}}
\newcommand{\DDD}{\mathcal{D}}

\newcommand{\ii}{^{-1}}
\newcommand{\de}{\delta}
\newcommand{\pa}{\partial}
\newcommand{\la}{\lambda}

\newcommand{\al}{\alpha}
\newcommand{\kk}{\kappa}
\newcommand{\rr}{\rho}

\newcommand{\ol}{\overline}
\newcommand{\La}{\Lambda}
\newcommand{\bet}{\beta}

\newcommand{\wt}{\widetilde}



\begin{document}

\title{Growth of Sobolev norms in the cubic defocusing nonlinear
Schr\"odinger equation with a convolution potential}
\author{Marcel Guardia\thanks{\tt Department of Mathematics, Mathematics Building, University of Maryland,
College Park, MD 20742-4015,  marcel.guardia@upc.edu.}}
\maketitle

\begin{abstract}
Fix $s>1$. Colliander, Keel, Staffilani, Tao and Takaoka proved in
\cite{CollianderKSTT10} the existence of solutions of the cubic defocusing nonlinear
Schr\"odinger equation in the two torus with
$s$-Sobolev norm growing in time. In this paper we generalize their result to the cubic defocusing nonlinear
Schr\"odinger equation with a convolution potential. Moreover, we show that the speed of growth is the same as the one obtained for the cubic defocusing nonlinear Schr\"odinger equation in \cite{GuardiaK12}. The results we obtain can deal with any potential in $H^{s_0}(\TT^2)$, $s_0>0$.
\end{abstract}

\section{Introduction}
The purpose of this paper is to study the growth of Sobolev norms for the periodic cubic defocusing nonlinear Schr\"odinger
equation with a convolution potential,
\begin{equation}\label{def:NLSV}
\left\{\begin{aligned}
&-i \pa_t u+\Delta u+V(x)\ast u=|u|^2 u\\
&u(0,x)=u_0(x),
\end{aligned}\right.
\end{equation}
where $x\in\TT^2=\RR^2/(2\pi\ZZ)^2$, $t\in\RR$,
$u:\RR\times\TT^2\rightarrow\CC$ and $V\in H^{s_0}(\TT^2)$, $s_0>0$, with real Fourier coefficients. This equation is globally well posed in time in any Sobolev space $H^s$ with $s\geq1$. 

If we write the Fourier series 
\[
u(t,x)=\sum_{n\in\ZZ^2}a_n(t) e^{inx}\qquad\text{ and }\qquad V(x)=\sum_{n\in\ZZ^2}v_n e^{inx},
\]
equation \eqref{def:NLSV} becomes an infinite dimensional ordinary differential equation for the Fourier coefficients $a_n$,
\begin{equation}\label{eq:NLSForFourierCoefs}
 -i\dot a_n=\left(|n|^2 +v_n\right)a_n+\sum_{\substack{n_1,n_2,n_3\in \ZZ^2\\n_1-n_2+n_3=n}}
 a_{n_1}\ol{a_{n_2}}a_{n_3}.
\end{equation}
Note that the assumption that $V$ has real Fourier coefficients implies that for this equation $a=0$ is an elliptic critical point.

Equation \eqref{eq:NLSForFourierCoefs} is Hamiltonian since it can be written as
\[
 \dot a_n=2i \pa_{\ \ol{a_n}}\ \HH(a, \ol a),
\]
where
\begin{equation}\label{def:HamForFourier}
\HH (a,\ol a)=\DDD (a,\ol a)+\GG  (a,\ol a)
\end{equation}
with
\begin{align*}
 \DDD (a,\ol a)&=\frac{1}{2}\sum_{n\in\ZZ^2}\left(|n|^2+v_n\right)|a_n|^2\\
\GG  (a,\ol a)&=\frac{1}{4}\sum_{\substack{n_1,n_2,n_3,n_4\in \ZZ^2\\
n_1-n_2+n_3=n_4}}a_{n_1}\ol{a_{n_2}}a_{n_3}\ol{a_{n_4}}.
\end{align*}
Besides $\HH$, equation \eqref{eq:NLSForFourierCoefs}  has another conserved quantity, 
\begin{equation}\label{def:NLS:mass}
 \MM(a)=\sum_{n\in\ZZ^2}|a_n|^2.
\end{equation}
It is usually called mass and  is just the square of the $\ell^2$-norm of the sequence $\{a_n\}_{n\in\ZZ^2}$, which coincides with the $L^2$ norm of $u$.

The problem of growth of $s$-Sobolev norms in Hamiltonian Partial Differential Equations (PDE) has drawn a wide attention in the past decades and was considered by Bourgain one of the next century problems in Hamiltonian PDE \cite{Bourgain00b}. The importance of this phenomenon is that solutions undergoing a large growth of $s$-Sobolev norm with $s>1$ are solutions which, as time evolves, transfer energy to higher and higher modes. The $s$-Sobolev norm is defined by
\[
\|u(t)\|_{H^s(\TT^2)}:=\|u(t,\cdot)\|_{H^s(\TT^2)}:=
\left(\sum_{n\in\ZZ^2} \langle n\rangle ^{2s}|a_n|^2\right)^{1/2},
\]
where $\langle n\rangle=(1+|n|^2)^{1/2}$.
It follows from conservation of energy $\HH(a,\ol a)$ that the $H^1$-norm
of any solution of (\ref{def:NLSV}) is uniformly bounded. Therefore, if  the $H^s$-norm of a solution grows indefinitely for some given $s>1$ while
the $H^1$-norm stays bounded, it is clear that the energy of the solution of \eqref{def:NLSV} must be transferred to higher modes.

In \cite{Bourgain96} (see also \cite{Staffilani}), Bourgain considered the cubic defocusing nonlinear equation 
\begin{equation}\label{def:NLS}
\left\{\begin{aligned}
&-i \pa_t u+\Delta u=|u|^2 u\\
&u(0,x)=u_0(x)
\end{aligned}\right.
\end{equation} 
and obtained upper bounds for the possible growth of Sobolev norms. More concretely, he proved that
\[
\|u(t)\|_{H^s}\le t^{C(s-1)}\|u(0)\|_{H^s} \quad \text{ for }\qquad t\to \infty.
\]
However, he did not obtain orbits obtaining such growth. In \cite{Bourgain00b}, Bourgain posed the following question,
\vskip 0.1in

{\it Are there solutions of \eqref{def:NLS}  
 with periodic boundary conditions in dimension $2$  or higher with unbounded
growth of $H^s$-norm for $s>1$? }

\vskip 0.1in

Moreover, he conjectured, that in case this is true, the upper bound that he had obtained in \cite{Bourgain96} was not optimal and that the growth should
be subpolynomial in time, that is,
\[
\|u(t)\|_{H^s}\ll  t^{\eps}\|u(0)\|_{H^s} \quad \text{ for }
\qquad t\to \infty, \text{ for all }\eps>0.
\]

The question posed by Bourgain is still open. Nevertheless, in the past decades there have been several results in the area which can be classified in two groups. The first group of works obtains upper bounds for the possible growth of $s$-Sobolev norms without showing that this growth  exists. Such results have been obtained for several Hamiltonian Partial Differential Equations (nonlinear Schr\"odinger equation with different nonlinearities, nonlinear wave equation, Hartree equation,...) defined in different manifolds (see \cite{Staffilani97, CollianderDKS01,Bourgain04,Zhong08, CatoireW10, Sohinger11,CollianderKO12,Bourgain96,Sohinger10a,Sohinger10b}).

The second group of results is devoted to the obtention of solutions undergoing growth of $s$-Sobolev norms. Most of the results show arbitrarily large, but finite, growth, as happens in this paper. The first result showing large finite growth is also due to Bourgain. In \cite{Bourgain96}, he
constructs orbits with arbitrarily large growth of  Sobolev norms for the wave equation with a cubic nonlinearity but with a spectrally
defined Laplacian. Similar results are obtained in \cite{GerardG11,Pocovnicu12} for certain nonlinear wave equation. The only results dealing with unbounded growth are  \cite{GerardG10,Pocovnicu11}, which deal with  the Szeg\"o equation, which is integrable, and \cite{Hani11, Hani12}, where unbounded growth  is shown to exist for a pseudo partial differential equation which is a simplification of \eqref{def:NLS}. 
Related to this second group of results, there is also \cite{CarlesF10}, where a spreading of energy among the modes for certain solutions of \eqref{def:NLS} is shown. Nevertheless, this spreading does not lead to growth of Sobolev norms.

Concerning growth of Sobolev norms for the nonlinear Schr\"odinger equation \eqref{def:NLS}, the first result  is due to Kuksin. In \cite{Kuksin97b}, he proves the existence of  solutions with any prescribed growth of $s$-Sobolev norm taking initial data large enough (depending on the prescribed growth). To obtain this result he takes advantage of the fact that for large data the nonlinearity in \eqref{def:NLS} plays a more significant role than the dispersion.

In the present paper we are rather interested in showing growth of Sobolev norms for solutions with small initial data. That is, for solutions  close (in some topology) to the solution $u=0$.  From the dynamical systems point of view, $u=0$ is an elliptic critical point and therefore, showing growth of Sobolev norms for small initial data means that the critical point is unstable in the Sobolev spaces $H^s$, $s>1$. 
The first paper dealing with such setting is the recent paper \cite{CollianderKSTT10}.  In this paper, it is obtained the following result.

\begin{theorem}[\cite{CollianderKSTT10}]\label{thm:Iteam}
Fix $s>1,\ \CCC\gg1$ and $\mu\ll 1$.
Then there exists a global solution $u(t)=u(t,\cdot)$ of \eqref{def:NLS} and $T$ satisfying
 that 
\[
 \|u(0)\|_{H^s}\le \mu,\qquad  \|u(T)\|_{H^s}\ge \CCC.
\]
\end{theorem}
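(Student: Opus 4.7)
The plan is to construct, for each choice of $s>1$, $\CCC\gg 1$ and $\mu\ll 1$, a finite ``resonant'' set $\Lambda\subset \ZZ^2$ of frequencies such that the dynamics of \eqref{def:NLS} restricted to data Fourier-supported in $\Lambda$ is well-approximated by a finite-dimensional Hamiltonian system -- a \emph{toy model} -- whose orbits transfer mass from a group of ``initial'' modes to a group of ``terminal'' modes. By engineering the initial modes to sit at low frequency and the terminal modes to sit at high frequency, such a mass transfer yields the required growth in $H^s$.

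First, I would perform a time-dependent gauge transformation $a_n\mapsto a_n\,e^{i\phi_n(t)}$ to kill the phase-trivial terms of $\GG$ (those with $\{n_1,n_3\}=\{n_2,n_4\}$), so that the resulting system's leading order is driven only by genuine resonances $n_1-n_2+n_3=n_4$ with $|n_1|^2-|n_2|^2+|n_3|^2=|n_4|^2$, i.e.\ by rectangles in $\ZZ^2$. Next, I would set up a combinatorial skeleton consisting of $N$ ``generations'' $\Lambda_1,\dots,\Lambda_N$, each of cardinality $2^{N-1}$, with the property that every $n\in\Lambda_j$ ($2\le j\le N-1$) is a vertex of exactly two resonant rectangles: one whose other three vertices are its two ``parents'' in $\Lambda_{j-1}$ plus one sibling in $\Lambda_j$, and one whose other three vertices are its two ``children'' in $\Lambda_{j+1}$ plus one sibling. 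Crucially, the set $\Lambda=\bigcup\Lambda_j$ must be \emph{generic}: it must contain no spurious rectangles beyond the intended ones and no accidental coincidences between generations. This is the principal combinatorial obstacle and is handled by first constructing an abstract skeleton (say in $\CC$) and then choosing an $\RR$-linear embedding $\CC\to\RR^2$ avoiding a countable union of algebraic constraints, followed by a careful rational approximation into $\ZZ^2$.

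Projecting the gauge-reduced Hamiltonian onto sequences supported in $\Lambda$, and using symmetry to assume the amplitude is constant within each generation, yields the toy model: a finite-dimensional system with variables $b_1(t),\dots,b_N(t)\in\CC$. Its key dynamical feature is a heteroclinic chain connecting invariant $2$-tori $\TT_j=\{|b_j|=1,\ b_k=0\text{ for }k\neq j\}$, one for each $j$. A finite shadowing argument along this chain produces an orbit that starts with essentially all of its mass on $\TT_1$ and ends with essentially all of it on $\TT_N$ in a finite time $T_*=T_*(N)$.

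The remaining task is to promote the toy-model orbit to a true NLS solution. I would run \eqref{def:NLS} with initial datum $u_0$ equal to the toy-model initial condition restricted to $\Lambda$ (rescaled by a small parameter $\delta$), and control the error by a Gronwall argument on the gauge-reduced equation: the genericity of $\Lambda$ ensures that interactions with frequencies outside $\Lambda$ and with non-resonant rectangles inside $\Lambda$ are small, so the true solution tracks the toy-model orbit up to time $T\approx T_*/\delta^2$. Finally, choosing $\Lambda_1$ at very low frequencies and $\Lambda_N$ at very high ones, the ratio
\[
\frac{\|u(T)\|_{H^s}^2}{\|u(0)\|_{H^s}^2}\;\gtrsim\;\frac{\sum_{n\in\Lambda_N}\langle n\rangle^{2s}}{\sum_{n\in\Lambda_1}\langle n\rangle^{2s}}
\]
can be made as large as $(\CCC/\mu)^2$, while rescaling by $\delta$ brings $\|u(0)\|_{H^s}$ down to $\mu$. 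The hardest step, beyond the combinatorics of constructing $\Lambda$, will be the approximation argument: balancing $\delta$, $N$ and the geometry of $\Lambda$ so that the error from the non-resonant and external-to-$\Lambda$ interactions stays small over the shadowing time $T_*/\delta^2$.
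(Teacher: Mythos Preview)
Your outline is correct and matches the CKSTT strategy that the paper cites (and adapts in Section~\ref{sec:SketchProof}): gauge away the trivial cubic terms, build a generation-structured resonant set $\Lambda$, reduce to the toy model \eqref{def:model}, find a heteroclinic-chain orbit transferring mass across generations, and close with a rescaling plus Gronwall-type approximation. One cosmetic discrepancy: both CKSTT and the paper run the toy-model orbit from generation $3$ to generation $N-1$ (not $1$ to $N$), so the Sobolev ratio is $S_{N-1}/S_3$ as in \eqref{def:Growth}; the extremal generations are avoided because the slider orbit needs buffer generations on each side.
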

Note that the initial Sobolev norm gives bounds for the mass and energy of the solution $u$, which are constant as time evolves, and therefore are small for all time. 

The paper \cite{CollianderKSTT10} does not give estimates for the time $T$ with respect to the growth of the Sobolev norms, namely estimates of the speed of the growth. These estimates have been obtained recently by the author and Vadim Kaloshin in \cite{GuardiaK12}. This paper contains the following two results. 

\begin{theorem}[\cite{GuardiaK12}]\label{thm:mainGK1}
Let $s>1$. Then, there exists $c>0$ with the following property:
for any large $\KK\gg 1$ there exists a a global solution $u(t)=u(t,\cdot)$
of (\ref{def:NLS}) and a time $T$ satisfying
\[
0 < T \leq \KK^c
\]
such that
\[
\|u(T)\|_{H^s}\ge \KK\,\|u(0)\|_{H^s}.
\]
Moreover, this solution can be chosen to satisfy
\[
 \|u(0)\|_{L^2}\le \KK^{-\sigma}.
\]
for some $\sigma>0$ independent of $\KK$.
\end{theorem}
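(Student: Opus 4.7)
The plan is to make quantitative every step of the non-perturbative construction of \cite{CollianderKSTT10}. First I would build a finite resonant set $\La\subset\ZZ^2$ arranged into $N$ consecutive generations $\La_1,\ldots,\La_N$, each of cardinality $2^{N-1}$, with the combinatorial property that the only nontrivial quartic resonances $n_1-n_2+n_3=n_4$, $|n_1|^2-|n_2|^2+|n_3|^2=|n_4|^2$ among modes of $\La$ come from rectangles with two vertices in some $\La_j$ and two in $\La_{j+1}$. I would arrange the geometry so that $\sum_{n\in\La_j}\langle n\rangle^{2s}\sim 2^{2(s-1)(j-1)}\sum_{n\in\La_1}\langle n\rangle^{2s}$; then transporting mass from $\La_1$ to $\La_N$ produces an $H^s$ growth factor of order $2^{(s-1)(N-1)}$, and choosing $N\sim (s-1)\ii \log_2 \KK$ yields the target growth $\KK$.

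Next I would pass to the \emph{toy model}, obtained by discarding in $\HH$ all non-resonant quartic interactions and all interactions involving modes outside $\La$. By symmetry the restricted system admits an invariant manifold on which all frequencies inside a single generation share a common amplitude $b_j(t)\in\CC$, reducing the dynamics to a finite-dimensional Hamiltonian ODE in $(b_1,\ldots,b_N)$. The key dynamical fact, whose proof is the core of \cite{CollianderKSTT10}, is the existence of a ``sliding'' trajectory: at times $t_1<t_2<\cdots<t_N$ the mass is essentially concentrated on $b_1,b_2,\ldots,b_N$ respectively, through a chain of hyperbolic saddles connected by heteroclinics. Here I would need an explicit estimate not present in \cite{CollianderKSTT10}: reducing the inter-generation transition to a two-dimensional Hamiltonian subsystem with a saddle at the origin, linearizing near the saddles, and using that the splitting between incoming and outgoing manifolds is of order a small parameter $\de$, one gets a passage time of order $|\log \de|$, hence $t_N-t_1\lesssim N|\log\de|$.

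Then comes the \emph{approximation step}: rescale by $b_n=\la\ii c_n$, $t\mapsto \la^2 t$ so that the $L^2$ norm of the initial datum is of order $\la\ii$, and compare the true evolution $a_n(t)$ of \eqref{eq:NLSForFourierCoefs} started at $a_n(0)=b_n(0)$ for $n\in\La$ and $a_n(0)=0$ otherwise, with the toy-model solution $c_n(\la^2 t)$. A Poincar\'e-normal-form transformation removes the non-resonant quartic monomials, reducing the error to quintic terms of size $\la^{-5}$; a separate estimate using the decay of $c_n$ off $\La$ controls the leakage into $\ZZ^2\setminus\La$. Summing the errors over the physical time window $\la^2 N|\log\de|$ and imposing that the error stays much smaller than the smallest active amplitude gives a condition of the form $\la\geq \KK^{c'}$, which simultaneously produces the claimed bounds $T\leq \KK^c$ and $\|u(0)\|_{L^2}\leq \KK^{-\sigma}$.

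The main obstacle, and the only really delicate point, is precisely this last quantitative approximation. The amplitudes $|c_n|$ along the sliding orbit are of order one, so individual quartic error terms are not tiny; one must ensure that no error accumulates exponentially in $N$, which is fatal because $N\sim\log\KK$. This forces a careful separation of error mechanisms (non-resonant $\La$-interactions, five-wave interactions, and escape outside $\La$) and exploitation of the mass conservation of both the toy model and the restricted normal form to prevent secular drift. If each of these errors can be bounded by a fixed polynomial in $\la\ii$ and $N$, the polynomial time bound $T\leq \KK^c$ follows; this is exactly the book-keeping carried out in \cite{GuardiaK12}.
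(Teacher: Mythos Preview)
Your outline is correct and follows essentially the same route as the paper (which in turn reproduces the strategy of \cite{GuardiaK12}, since Theorem~\ref{thm:mainGK1} is only cited here and the present paper proves its potential-perturbed analogue Theorem~\ref{thm:main} by the same scheme): resonant set $\La$ with $N$ generations, reduction to the toy model \eqref{def:model}, the sliding orbit with passage time $O(N\log(1/\de))$, rescaling by $\la$, and a normal-form plus Gronwall approximation. The one point where your sketch is slightly off is the mechanism preventing exponential-in-$N$ error growth in the approximation step: it is not mass conservation but rather an additional combinatorial condition on $\La$ (condition $6_\La$, limiting to at most two the resonant rectangles linking any $n\notin\La$ to two modes of $\La$), which is what yields the crucial bound $\int_0^T f_n(t)\,dt\le C\ga N$ on the linearized error operator in Lemma~\ref{lemma:Approx:BoundLinear}.
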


Note that this theorem does not give any information of the initial Sobolev norm but only on its growth. Nevertheless, we want to emphasize that it is dealing with small data since the $L_2$-norm of the solution is very small. One can impose also that the solution has small initial $s$-Sobolev norm at the expense of obtaining a slower growth.

\begin{theorem}[\cite{GuardiaK12}]\label{thm:mainGK2}
Let $s>1$. Then there exists $c>0$ with the following
property: for any small $\mu\ll1$ and large $\CCC\gg 1$ there exists a
a global solution $u(t)=u(t,\cdot)$ of (\ref{def:NLS}) and a time $T$ satisfying
\[
0<T\leq \left(\frac{\CCC}{\mu}\right)^{c\ln (\CCC/\mu)}
\]
such that
\[
\|u(T)\|_{H^s}\ge \CCC\,\,\,\text{ and }\,\,\,\|u(0)\|_{H^s}\leq \mu.
\]
\end{theorem}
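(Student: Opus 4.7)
The plan is to follow the toy-model strategy of Colliander--Keel--Staffilani--Tao--Takaoka \cite{CollianderKSTT10} while keeping quantitative control on every time scale, as was done in \cite{GuardiaK12}. I would organize the argument in four steps.

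\textbf{Combinatorial set.} First choose a finite set $\La\subset\ZZ^2$ partitioned into $N$ ``generations'' $\La_1,\dots,\La_N$, each of cardinality $2^{N-1}$, so that the only nontrivial resonant quadruples $n_1-n_2+n_3-n_4=0$ with $|n_1|^2-|n_2|^2+|n_3|^2-|n_4|^2=0$ having all four vertices in $\La$ connect two consecutive generations. The construction is that of \cite{CollianderKSTT10}, placed on a sufficiently dilated and translated grid to push away spurious resonances, and tuned so that the characteristic $H^s$-weight of $\La_N$ is $2^{(N-1)(s-1)}$ times that of $\La_1$.

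\textbf{Toy-model dynamics.} Second, restrict the Hamiltonian $\HH$ (with $V\equiv 0$) to modes supported on $\La$, drop the non-resonant terms, and then impose the symmetry $a_n=b_j$ for all $n\in\La_j$. This reduces the dynamics to a finite-dimensional Hamiltonian system in $(b_1,\dots,b_N)\in\CC^N$. I would construct a ``relay'' orbit that starts concentrated in $b_1$, performs $N-1$ near-heteroclinic excursions between two-mode invariant subsystems, and ends concentrated in $b_N$. Conservation of $\sum_j|b_j|^2$ then forces almost all the initial mass into $\La_N$, producing an $H^s$ growth of factor $\sim 2^{(N-1)(s-1)}$.

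\textbf{Quantitative scales.} Third, quantify. Each heteroclinic-type excursion takes toy-model time of order $\log(1/\eta)$, where $\eta$ is a safety parameter measuring how far the trajectory stays from the unstable manifold of the relevant saddle. Choosing $N\sim\log(\CCC/\mu)/(s-1)$ to achieve the required amplification $\CCC/\mu$, and then rescaling $b\mapsto \la b$ (with a compensating time change $t\mapsto \la^{-2}t$) so that the initial Sobolev norm equals $\mu$, yields a total NLS time $T\sim \la^{-2}N\log(1/\eta)$. A judicious choice of $\la$ and $\eta$ polynomially small in $\CCC/\mu$ produces the stated bound $T\leq (\CCC/\mu)^{c\log(\CCC/\mu)}$.

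\textbf{Approximation.} Finally, lift the toy-model trajectory to a genuine solution of \eqref{def:NLS}. An approximation lemma, proved via a Bourgain-type $X^{s,b}$ Gronwall argument on the difference between the full NLS flow and its resonant truncation on $\La$, shows that the true NLS solution with matching initial data stays within $\eta/2$ of the toy-model orbit throughout $[0,T]$, provided $\la$ is taken sufficiently small. The main obstacle is precisely this trade-off: the approximation error grows like $e^{C\la^2 T}$, while the relay construction forces $\la^{2}T$ to be large; balancing the growth of the error against the size of $\eta$ and the number of generations $N$ is what determines the exponent $c$ and pins down the compatibility between the toy-model dynamics and the true PDE.
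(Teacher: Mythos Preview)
Your four-step outline matches the architecture of \cite{GuardiaK12}, which this paper recapitulates when proving the potential analogue Theorem~\ref{thm:mainslow}. But there is a real gap in your Step~3. You assert that a choice of $\la$ and $\eta$ ``polynomially small in $\CCC/\mu$'' produces the stated bound; a polynomial $\la$, however, would yield a \emph{polynomial} time $T$. The super-polynomial bound $(\CCC/\mu)^{c\ln(\CCC/\mu)}$ arises because the combinatorial construction of $\La$ forces the frequencies to sit at distance $|n|\sim e^{\eta N^2}$ from the origin (Proposition~\ref{thm:SetLambda:GK}, Theorem~\ref{thm:SetLambda}): the genericity conditions---closure, faithfulness, nondegeneracy, and the no-spreading condition $6_\La$---can only be enforced after a dilation of this magnitude. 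Since $\|u(0)\|_{H^s}^2\sim\la^{-2}\sum_{n\in\La_3}|n|^{2s}$, the constraint $\|u(0)\|_{H^s}\le\mu$ then forces (in the paper's convention with $\la$ large) $\la\gtrsim\mu^{-1}2^{(N-1)/2}e^{\eta_2 N^2/2}$; with $N\sim\ln(\CCC/\mu)$ this gives $\la$ of order $(\CCC/\mu)^{c\ln(\CCC/\mu)}$, and $T=\la^2T_0$ inherits the same form. In short, the time scale is dictated by \emph{where} $\La$ must live in frequency space, a point your sketch does not account for.

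A smaller divergence concerns Step~4: the approximation is not done via $X^{s,b}$ estimates but in $\ell^1$. One first applies a partial Birkhoff normal form (Theorem~\ref{thm:NormalForm}) to push the non-resonant cubic terms into a quintic remainder, and then runs a direct Gronwall argument on $\|\wt\bet-\bet^\la\|_{\ell^1}$ (Theorem~\ref{thm:Approximation}). The error is not controlled by the crude $e^{C\la^2T}$ you describe; the refined estimate of Lemma~\ref{lemma:Approx:BoundLinear} gives an integrated linear bound of order $N$ rather than the naive $T_0\sim N^2$, relying on the additional combinatorial condition $6_\La$ that limits how many resonant rectangles touch a mode outside $\La$. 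This refinement is what makes the polynomial-time Theorem~\ref{thm:mainGK1} possible; for Theorem~\ref{thm:mainGK2} it matters less, since the $e^{N^2}$ placement of $\La$ already dominates the time scale.
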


The purpose of the present paper is to show that the stated results from  \cite{CollianderKSTT10} and \cite{GuardiaK12} are also valid if one adds a convolution potential term to \eqref{def:NLS}. Namely, if one considers equation \eqref{def:NLSV}. The choice of a convolution potential instead of the classical multiplicative potential is, as usual, due to the fact that the term $V(x)\ast u$ is diagonal in the Fourier basis. This fact simplifies considerably the problem. The existence of solutions of \eqref{def:NLSV} with a multiplicative potential instead of a convolution one undergoing growth of Sobolev norms is still not known.

In this paper, we show that the instability mechanism developed in   \cite{CollianderKSTT10,GuardiaK12}  is also valid, with some modifications, for \eqref{def:NLSV} and that therefore for this equation there also exist solutions with arbitrarily high, but finite, growth of Sobolev norm. Moreover, we obtain the same time estimates as the ones given by Theorems \ref{thm:mainGK1} and \ref{thm:mainGK2}.
Note that the cited results  showing growth of Sobolev norms dealt with particular equations. Therefore, it was not clear whether the mechanisms were still valid if one modified slightly the equation. That is,  how robust  the growth of Sobolev norms was. A positive answer to this question  is given in the present paper for the mechanism developed in \cite{CollianderKSTT10} and \cite{GuardiaK12} since it shows that it is still valid when one adds a convolution potential. We want to emphasize that we obtain results for \emph{any potential}. That is, we do not need any smallness condition. We do not need either any non-resonant condition on its Fourier coefficients as usually happens in stability and KAM results (see for instance \cite{BambusiG03, BambusiG06,Bourgain98, GaucklerL10,Eliasson10}). This implies that the reached growth of Sobolev norms is valid for open sets of potentials.

The main results of this paper are the following.
\begin{theorem}\label{thm:main}
Let $s_0>0$ and $s>1$ and take $V\in H^{s_0}(\TT^2)$ with real Fourier coefficients. Then, there exists $c>0$ with the following property:
for any large $\KK\gg 1$ there exists a global solution $u(t)=u(t,\cdot)$
of (\ref{def:NLSV}) and a time $T$ satisfying
\[
0 < T \leq \KK^c
\]
such that
\[
\|u(T)\|_{H^s}\ge \KK\,\|u(0)\|_{H^s}.
\]
Moreover, this solution can be chosen to satisfy
\[
 \|u(0)\|_{L^2}\le \KK^{-\sigma}.
\]
for some $\sigma>0$ independent of $\KK$.
\end{theorem}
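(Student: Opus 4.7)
The plan is to adapt the Colliander--Keel--Staffilani--Tao--Takaoka mechanism \cite{CollianderKSTT10} in the quantitative form of \cite{GuardiaK12} to the perturbed equation \eqref{def:NLSV}. Recall that the mechanism has three ingredients: (i) a first Birkhoff normal form that isolates the resonant cubic interactions and pushes the rest into an error controlled by the mass; (ii) a finite set $\La\subset\ZZ^2$ of ``resonant frequencies'', built from pairs of parallelograms/rectangles, on which the truncated dynamics reduces to a finite-dimensional ``toy model''; (iii) a shadowing argument showing that a true solution of \eqref{def:NLSV} stays close to a carefully chosen orbit of the toy model over a long window $[0,T]$, while the toy model moves its $\ell^2$ mass from the first to the last generation of $\La$, producing the $H^s$-blow up. My goal is to check that each of the three steps survives the addition of the diagonal term $v_n a_n$ in \eqref{eq:NLSForFourierCoefs}, and that the time estimate $T\lesssim \KK^c$ is preserved.

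The first step is the Birkhoff normal form. With the potential, the quartic Hamiltonian $\HH=\DDD+\GG$ now has unperturbed frequencies $\om_n=|n|^2+v_n$, so the small divisors are of the form
\[
\Om(n_1,n_2,n_3,n_4)=\om_{n_1}-\om_{n_2}+\om_{n_3}-\om_{n_4}=\bigl(|n_1|^2-|n_2|^2+|n_3|^2-|n_4|^2\bigr)+\bigl(v_{n_1}-v_{n_2}+v_{n_3}-v_{n_4}\bigr).
\]
Because $V\in H^{s_0}(\TT^2)$ with $s_0>0$ one has $|v_n|\lesssim \|V\|_{H^{s_0}}$, so the potential contribution to $\Om$ is uniformly bounded while the kinetic part takes integer values. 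The resonant set (where $\Om=0$) in the unperturbed case consists of momentum-preserving rectangles, and these are \emph{lost} as exact resonances when $V$ is turned on. The crucial observation is that I do not need exact resonances; I only need the rectangles used to build $\La$ to be resonant \emph{up to the inverse time scale} at which the toy model operates. All other quadruples must satisfy a quantitative lower bound $|\Om|\gtrsim 1$ to make the generating function of the normal form bounded. I will therefore perform the normal form only with respect to quadruples whose kinetic divisor is an integer of modulus $\geq 2\|V\|_{L^\infty}+1$, leaving the ``nearly resonant'' quadruples (kinetic divisor in $\{-1,0,1\}$) inside the reduced Hamiltonian. This preserves the normal-form estimates of \cite{GuardiaK12} up to constants depending on $\|V\|_{H^{s_0}}$.

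The second step is the construction of $\La$. I keep the geometric construction of $\La=\La_1\cup\cdots\cup\La_N$ of combinatorial rectangles from \cite{CollianderKSTT10}, but placed on a dyadic annulus of very large radius $R=R(\KK)$. Under the scaling $n\mapsto Rn$ (with integer dilations combined with rotations of $\ZZ^2$, as in \cite{CollianderKSTT10}) all kinetic divisors scale by $R^2$, so the genuine rectangles remain exact resonances of the kinetic part, while any \emph{nonzero} kinetic divisor is at least $R^2\gg\|V\|_{L^\infty}$. Hence for rectangles the full divisor $\Om$ equals the (bounded) potential contribution, and for every non-rectangular quadruple it is still $\gtrsim R^2$, so the normal form from step one applies. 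After restricting to $\La$ the Hamiltonian therefore reduces to the CKSTT toy model plus a perturbation of size $O(R^{-2})$ per quadruple coming from the $v_n$ corrections of the rectangle divisors, and plus the usual mass-squared error from the out-of-$\La$ modes. Choosing the masses and $R$ as in \cite{GuardiaK12} both errors remain dominated by the toy-model speed.

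The third step is shadowing. Since the reduced dynamics on $\La$ is a small perturbation of the toy model studied in \cite{GuardiaK12}, one runs the same hyperbolic chain argument: the toy model possesses a heteroclinic orbit that sweeps mass from generation $1$ to generation $N$ in time $T_{\mathrm{toy}}\leq C^N$, and a standard Gronwall/shadowing estimate shows that any solution of \eqref{def:NLSV} starting sufficiently close will follow the same transfer, producing the required growth of $H^s$ with $N=N(\KK)\sim \log \KK$ and therefore $T\leq\KK^c$. The smallness of the initial $L^2$ norm is obtained, as in \cite{GuardiaK12}, by rescaling the masses on $\La$.

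The main obstacle I anticipate is step two: making sure that the $v$-contributions to the divisors of the \emph{rectangles} in $\La$ are actually absorbable by the toy-model argument. These contributions are uniformly bounded but may not decay in $R$, whereas the natural frequency of oscillation of the toy model decreases with $R$. The remedy is that one has a lot of freedom in choosing $\La$: there are many rescaled, rotated copies of the CKSTT set, and by a pigeonhole/averaging argument using $V\in H^{s_0}$ one can select a copy on which the sums $v_{n_1}-v_{n_2}+v_{n_3}-v_{n_4}$ over the active rectangles are as small as needed; it is essentially this point, rather than the normal form or the shadowing, where the regularity hypothesis $s_0>0$ enters, and where no non-resonance condition on the $v_n$'s is required.
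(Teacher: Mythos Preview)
Your three-step outline matches the paper's strategy, and you correctly isolate the real issue: the residual divisor $v_{n_1}-v_{n_2}+v_{n_3}-v_{n_4}$ on the rectangles of $\La$. However, your proposed resolution of this obstacle is both unnecessary and problematic, and you miss the direct argument the paper actually uses. The oversight is in how you exploit $V\in H^{s_0}$: you only extract the uniform bound $|v_n|\lesssim\|V\|_{H^{s_0}}$, then write that the rectangle divisors ``may not decay in $R$'' and propose a pigeonhole over rotated/dilated copies of $\La$. But $V\in H^{s_0}$ with $s_0>0$ gives the \emph{pointwise decay} $|v_n|\le C\langle n\rangle^{-s_0}$. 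Since the construction already places $\La$ at frequencies $|n|\ge e^{\eta_1 N^2}$, the rectangle divisors satisfy $|v_{n_1}-v_{n_2}+v_{n_3}-v_{n_4}|\le C e^{-s_0\eta_1 N^2}$, far smaller than any inverse power of $\la$. This is exactly what the paper does (see the term $\JJ$ in \eqref{def:RemainderRotating} and Lemma~\ref{lemma:Approx:BoundFirstIteration}): on the rectangles one writes $e^{i\rr t}=1+(e^{i\rr t}-1)$, keeps the $1$ in the toy model, and absorbs $e^{i\rr t}-1=O(|\rr|\,t)$ into the approximation remainder. No averaging, no selection of a ``good'' copy of $\La$, and no rotations of $\ZZ^2$ (which in any case has only the four obvious ones) are needed.

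A smaller point concerns your normal form. You remove only quadruples with kinetic divisor of modulus at least $2\|V\|_{L^\infty}+1$, leaving many nonresonant quadruples in the truncated system; this jeopardizes both the invariance of $\{\beta_n=0:n\notin\La\}$ and the clean reduction to the toy model. The paper's normal form (Theorem~\ref{thm:NormalForm}) is sharper: it fixes $\kk_0$ so that $|v_n|\le 1/100$ for $|n|\ge\kk_0$, and for quadruples of \emph{all high} modes it removes every one with nonzero kinetic divisor (the full divisor is then at least $1-4/100$, so there is no small-divisor problem). Only quadruples containing a low mode get the soft cutoff $|\rr|\le\eta$, and two further integer dilations of $\La$ (conditions $7_\La$, $8_\La$ in Section~\ref{app:SmallSobolev}) ensure that such quadruples never touch $\La$. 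With this splitting the truncated equation on $\La$ is \emph{exactly} the CKSTT toy model, and the rest of the proof proceeds as in \cite{GuardiaK12}.
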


\begin{theorem}\label{thm:mainslow}
Let $s_0>0$ and $s>1$ and take $V\in H^{s_0}(\TT^2)$ with real Fourier coefficients. Then, there exists $c>0$ with the following
property: for any small $\mu\ll1$ and large $\CCC\gg 1$ there exists
a global solution $u(t)=u(t,\cdot)$ of (\ref{def:NLSV}) and a time $T$ satisfying
\[
0<T\leq \left(\frac{\CCC}{\mu}\right)^{c\ln (\CCC/\mu)}
\]
such that
\[
\|u(T)\|_{H^s}\ge \CCC\,\,\,\text{ and }\,\,\,\|u(0)\|_{H^s}\leq \mu.
\]
\end{theorem}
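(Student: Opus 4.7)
The plan is to adapt the strategy by which Theorem \ref{thm:mainGK2} is deduced from Theorem \ref{thm:mainGK1} in \cite{GuardiaK12}: both results are produced by the same underlying instability mechanism, with parameters tuned in two different ways, and we expect the same to hold here once the convolution potential has been accommodated in the proof of Theorem \ref{thm:main}. The proof will therefore proceed in two stages. First, we set up the CKSTT-type finite dimensional resonant model on a carefully chosen set $\Lambda\subset\ZZ^2$ of $N$ generations of equal size, and produce a toy-model trajectory that moves almost all of the mass from the first generation to the last. Second, we choose the parameters $N$, the frequency scale $R$ of the first generation, and the overall amplitude of the $a_n$ as functions of $\mu$ and $\CCC$ so that $\|u(0)\|_{H^s}\le\mu$ and $\|u(T)\|_{H^s}\ge\CCC$.

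The resonant model arises after passing to the interaction picture $a_n=e^{i(|n|^2+v_n)t}r_n$, which removes $\DDD$ and turns $\GG$ into a time-dependent quartic whose phases are
\[
\Omega(n_1,n_2,n_3,n_4) = \bigl(|n_1|^2-|n_2|^2+|n_3|^2-|n_4|^2\bigr) + \bigl(v_{n_1}-v_{n_2}+v_{n_3}-v_{n_4}\bigr).
\]
Relative to the pure equation \eqref{def:NLS}, the novelty is the second parenthesis, which prevents the CKSTT ``rectangles'' $n_1-n_2+n_3=n_4$, $|n_1|^2-|n_2|^2+|n_3|^2=|n_4|^2$ from being exactly resonant. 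The approach is to select $\Lambda$ exactly as in \cite{CollianderKSTT10} and then to perform a Birkhoff-type normal form absorbing the detuning produced by $V$ into a small non-resonant remainder. Because $V\in H^{s_0}(\TT^2)$ with $s_0>0$, the sequence $\{v_n\}$ decays, so at the high-frequency scales relevant for the last generations of $\Lambda$ only finitely many $v_n$ are non-negligible; this is what allows the argument to work for arbitrary (not necessarily small) potentials, and without any Diophantine non-resonance hypothesis on the $v_n$ of the type usually required in KAM-type arguments. Once this normalization is in place, the toy-model dynamics on $\Lambda$ reduce to the potential-free case treated in \cite{CollianderKSTT10,GuardiaK12}.

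For the parameter count, place the first generation of $\Lambda$ at scale $|n|\sim R$ and the last at scale $|n|\sim RQ$. In the CKSTT construction $Q$ grows geometrically with the number of generations $N$, so the intrinsic $H^s$-growth factor $Q^s$ is exponential in $N$; taking $N\sim \ln(\CCC/\mu)$ realizes the required gain of $\CCC/\mu$. The scale $R$ is then tuned so that the initial $H^s$ norm equals $\mu$, while the overall amplitude of the $a_n$ is kept inside the perturbative window required by the approximation argument (compare the $\|u(0)\|_{L^2}\le\KK^{-\sigma}$ bound of Theorem \ref{thm:main}). The time to transfer mass across a single generation is polynomial in the local parameters, but since the $N$ heteroclinic transitions of the toy model must be traversed in sequence the total time compounds multiplicatively in $N$, producing the superpolynomial bound $T\le (\CCC/\mu)^{c\ln(\CCC/\mu)}$ claimed in the theorem.

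The main obstacle is the normal-form step: one must certify that the true dynamics of \eqref{def:NLSV} is shadowed by the toy-model dynamics over the long time $T$, in the presence of the potential-induced detuning of the CKSTT rectangles. Alongside the resonant interactions that drive the cascade, one has to account carefully for the new near-resonant interactions introduced by $V$, show that they can be eliminated or averaged out without destroying the generation structure of $\Lambda$, and propagate the resulting error estimates through all $N$ generations. Once this approximation is established, the conclusion follows from the same chain of estimates as in \cite{GuardiaK12}.
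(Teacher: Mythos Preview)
Your overall strategy is correct and is exactly what the paper does: the whole machinery built for Theorem~\ref{thm:main} (the set $\Lambda$ of Theorem~\ref{thm:SetLambda}, the toy model orbit of Theorem~\ref{thm:ToyModelOrbit}, and the approximation Theorem~\ref{thm:Approximation}) is reused unchanged, and only the rescaling parameter $\lambda$ in \eqref{def:Rescaling} is chosen differently. Concretely, the paper sets
\[
\lambda=\mu^{-1}\,2^{(N-1)/2}\,e^{\eta_2 N^2/2},
\]
which forces $\lambda^{-2}S_3\le\mu^2$ and hence $\|u(0)\|_{H^s}\le\mu$, while the growth ratio $\KK=\CCC/\mu$ fixes $N\sim\ln(\CCC/\mu)$ via Theorem~\ref{thm:SetLambda}.

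There is, however, a genuine error in your accounting for the time bound. The superpolynomial estimate $T\le(\CCC/\mu)^{c\ln(\CCC/\mu)}$ does \emph{not} arise because ``the $N$ heteroclinic transitions compound multiplicatively''. The toy model time in Theorem~\ref{thm:ToyModelOrbit} is $T_0=\OO(\gamma N^2)$, i.e.\ polynomial in $N$; the transitions add, they do not multiply. The physical time is $T=\lambda^2 T_0$, and the reason $T$ is superpolynomial in $\CCC/\mu$ here (while it was polynomial in $\KK$ for Theorem~\ref{thm:main}) is that the modes of $\Lambda$ live at frequency scale $e^{\OO(N^2)}$ by Theorem~\ref{thm:SetLambda}, so $S_3\sim e^{\eta_2 N^2}$, and controlling the initial $H^s$ norm forces $\lambda\sim e^{cN^2}$ rather than the $\lambda\sim e^{cN}$ that sufficed in Theorem~\ref{thm:main}. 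Plugging $N\sim\ln(\CCC/\mu)$ into $T\sim e^{cN^2}$ gives exactly $(\CCC/\mu)^{c\ln(\CCC/\mu)}$. Your proposal to tune an independent frequency scale $R$ is not what the paper does and is not needed: the scale of $\Lambda$ is already pinned down by $N$, and it is the amplitude $\lambda$ that carries the remaining freedom.
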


\begin{remark}
From the proofs of Theorems \ref{thm:main} and \ref{thm:mainslow}, one can easily see that the initial conditions $u_0$ of $u(t)$ which we obtain in these theorems  do not depend fully on the potential $V$ but only on its norm $\|V\|_{H^{s_0}}$. 
\end{remark}
\begin{remark}
As happened with the results in \cite{CollianderKSTT10} and \cite{GuardiaK12},
even if the just stated theorems are for equation  \eqref{def:NLS} in
the two torus, they can be applied to the $d$ dimensional torus
with $d\geq 2$, since the obtained solutions are also solutions
of equation \eqref{def:NLS} in the $\TT^d$ setting all
the other harmonics to zero. 
\end{remark}

From the dynamical systems point of view, as happened for equation \eqref{def:NLS}, $u=0$ is an elliptic critical point for equation \eqref{def:NLSV}. For equation  \eqref{def:NLS}, this critical point is extremely resonant since its eigenvalues are $\la_n^\pm= \pm i |n|^2$ (see equation \eqref{eq:NLSForFourierCoefs}). It is well known that resonances are a source of instabilities and the mechanism developed in \cite{CollianderKSTT10}  uses them to obtain the growth of Sobolev norms. When one adds the potential and considers equation \eqref{def:NLSV}, these resonances tipically break down. Thus, one would expect that, for well choosen potentials, the growth of Sobolev norms is slower than the one obtained for \eqref{def:NLS} and, therefore,that the critical point $u=0$ is  stable for longer time. Theorems \ref{thm:main} and \ref{thm:mainslow} show that this expected longer stability time does not hold and that the instability phenomena of \eqref{def:NLSV} have the same speed as the ones obtained for
 \eqref{def:NLS}. 
The reason behind this fact is that for equation \eqref{def:NLSV}, since $V\in H^{s_0}$, $s_0>0$, the eigenvalues of $u=0$ satisfy
\[
 \la_n^\pm= \pm i \left(|n|^2+v_n\right)=\pm i |n|^2+\OO\left(\frac{1}{|n|^{s_0}}\right)
\]
and therefore, for high enough modes, the potential contribution to the eigenvalues is negligible. Then, for large enough modes, even if the potential might kill the resonances, one has \emph{almost resonant} terms and one can use them to obtain the growth of Sobolev norms (see Section \ref{sec:SketchProof} for more details).

The instability times obtained in Theorem \ref{thm:mainslow} can be compared with the stability results obtained in  \cite{BambusiG03} (see also \cite{BambusiG06,GaucklerL10}). These papers study the stability of the elliptic critical point $u=0$ of \eqref{def:NLSV} (for certain potentials) in the $H^s$ topology  for $s\geq s_0>1$ and some $s_0$. More concretely, in \cite{BambusiG03} it is obtained the following result. They consider equation \eqref{def:NLSV} with a potential $V$, which satisfies certain non-resonance conditions which is fulfilled by a positive measure set of potentials, and an initial condition $u_0$ satisfying that $\eps=\|u_0\|_{H^s}$ is small enough. Then, they show that the corresponding solution satisfies
\[
 \|u(t)\|_{H^s}\leq 2\eps\,\,\,\text{ for }|t|\leq \frac{C}{\eps^M},
\]
for some constants $C,M>0$ independent of $\eps$. That is, for a positive measure set of potentials, the elliptic critical point $u=0$ of equation \eqref{def:NLSV} is stable for times which are polynomially long with respect to the size of the initial condition in $H^s$. This implies, that the time estimates that we obtain in Theorem \ref{thm:mainslow} are almost optimal, since the instability time cannot be shorter than the time obtained in \cite{BambusiG03}.


Equation \eqref{def:NLSV} was also considered in  \cite{Bourgain98,Eliasson10, FaouG10}. In the first two papers,  the authors prove, the existence of  small quasi-periodic solutions for the nonlinear Schr\"odinger equation \eqref{def:NLSV} in the 2-dimensional torus \cite{Bourgain98}
and in a torus of any dimension \cite{Eliasson10}. These results jointly with  Theorems \ref{thm:main} and \ref{thm:mainslow} show that in any neighborhood of $u=0$ there is coexistence of stable motion (KAM tori) and unstable motion (orbits undergoing growth of $s$-Sobolev norms). In \cite{FaouG10}, the authors
prove a Nekhoroshev type result for solutions of \eqref{def:NLSV} with small analytic initial condition. In all these papers, the authors need to assume some non-resonance condition on the potential $V$.

The rest of the paper is devoted to prove Theorems \ref{thm:main} and \ref{thm:mainslow}. These proofs follow the strategy developed to prove Theorem \ref{thm:Iteam} in \cite{CollianderKSTT10}, which was refined in \cite{GuardiaK12} to prove Theorems \ref{thm:mainGK1} and \ref{thm:mainGK2}. 

\section{Proof of Theorems \ref{thm:main} and \ref{thm:mainslow}}\label{sec:SketchProof}
To prove Theorems \ref{thm:main} and \ref{thm:mainslow} we study equation \eqref{eq:NLSForFourierCoefs} in the Sobolev spaces  $H^s$ with $s> 1$
as well as in the $\ell^1$-space, which is defined as usual as
\[
 \ell^1=\left\{a:\ZZ^2\rightarrow \CC: \|a\|_{\ell^1}=
 \sum_{n\in\ZZ^2}|a_n|<\infty\right\}.
\]


The first step is to perform some changes of variables that will simplify certain terms of equation \eqref{eq:NLSForFourierCoefs}. These changes will be obtained using the gauge invariance of the equation and performing one step of partial Birkhoff normal form. To take advantage of the gauge invariance we perform the change of coordinates
\begin{equation}\label{def:GaugeChange}
 a_n=e^{iGt}r_n,
\end{equation}
which transforms equation \eqref{eq:NLSForFourierCoefs} into
\[
 -i\dot r_n=\left(|n|^2 +v_n-G\right)r_n+\sum_{\substack{n_1,n_2,n_3\in \ZZ^2\\n_1-n_2+n_3=n}}
 r_{n_1}\ol{r_{n_2}}r_{n_3}.
\]
We choose $G$ properly to remove some terms of the nonlinear part of the equation. We write the sum as
\[
 \sum_{\substack{n_1,n_2,n_3\in \ZZ^2\\n_1-n_2+n_3=n}}=\sum_{n_1,n_3\neq n}+\sum_{n_1=n}+\sum_{n_3=n}-\sum_{n_1=n_3=n}.
\]
The last sum is just one term given by $-|r_n|^2r_n$. The second and third sums can be written as
\[
 r_n\sum_{n_2\in\ZZ^2}|r_{n_2}|^2.
\]
Recall that the $\ell^2$ norm of $a$ is a first integral of equation \eqref{eq:NLSForFourierCoefs} and so the same happens with the $\ell^2$ norm of $r$. Therefore, the second and third sums are equal to $2r_n \|r\|_{\ell_2}$ and choosing $G=2\|r\|_{\ell^2}$ we can remove them. With this choice, we obtain the equation
\begin{equation}\label{eq:NLSAfterGauge}
 -i\dot r_n=\left(|n|^2 +v_n\right)r_n-|r_n|^2r_n+\sum_{\substack{n_1,n_2,n_3\in \ZZ^2\\n_1-n_2+n_3=n,n_1,n_3\neq n}}
 r_{n_1}\ol{r_{n_2}}r_{n_3}.
\end{equation}
It  is a Hamiltonian system with respect to 
\begin{equation}\label{def:HamAfterGauge}
\HH' (r,\ol r)=\DDD (r,\ol r)+\GG'  (r,\ol r),
\end{equation}
where $\DDD$ has been introduced in \eqref{def:HamForFourier} and 
\[
\GG' (r,\ol r)=-\frac{1}{4}|r_n|^4+\frac{1}{4}\sum_{\substack{n_1,n_2,n_3,n_4\in \ZZ^2\\
n_1-n_2+n_3=n_4, n_1,n_3\neq n_4}}r_{n_1}\ol{r_{n_2}}r_{n_3}\ol{r_{n_4}}.
\]
Next step is to perform one step of partial Birkhoff normal form to reduce the size of certain terms in equation \eqref{eq:NLSAfterGauge}. In the normal form procedure we need to treat in a considerably different way the low and high harmonics. The reason is the following. The eigenvalues of the critical point $r=0$ of equation \eqref{eq:NLSAfterGauge} are given by
\[
 \la_n=\pm i(|n|^2+v_n).
\]
Recall that we assume that $v_n\in\RR$ and therefore the eigenvalues are purely imaginary. Recall also that by hypothesis $V\in H^{s_0}(\TT^2)$ and thus its Fourier coefficients decay polynomially in $|n|$. This implies that for low harmonics the term $v_n$ makes a big influence in the eigenvalues whereas for very high harmonics, the eigenvalues $\la_n$ are extremely close to $\pm i|n|^2$. To quantify this fact, we define a constant $\kk_0$ such that 
\begin{equation}\label{def:kappa0}
 |v_n|\leq \frac{1}{100}\,\,\,\text{ for }|n|\geq \kk_0.
\end{equation}
Throughout the paper we call low modes  to the modes satisfying $|n|<\kk_0$ and high modes to the modes satisfying $|n|\geq \kk_0$. We also use the notation
\[
 B(\kk_0)=\left\{n\in\ZZ^2: |n|<\kk_0\right\}.
\]
We  perform a partial normal form procedure treating differently each monomial depending whether $|n|\leq \kk_0$ or $|n|\geq \kk_0$. Before stating the theorem, we introduce the coefficients  $\rr_{n_1n_2n_3n_4}$, which are the small divisors that will arise in the normal form,
\begin{equation}\label{def:SmallDivisor}
 \rr_{n_1n_2n_3n_4}=\sum_{j=1}^4(-1)^{j+1}\left(|n_j|^2+v_{n_j}\right).
\end{equation}

\begin{theorem}\label{thm:NormalForm}
Fix $\eta\in (1/4,1/3)$. There exists a symplectic
change of coordinates $r=\Gamma(\alpha)$ in a neighborhood of $0$ in $\ell^1$
which transforms the Hamiltonian $\HH'$ in \eqref{def:HamAfterGauge} into the Hamiltonian,
\[
 \HH'\circ\Gamma=\DDD+\wt \GG_1+\wt \GG_2+\RRR,
\]
where $\wt \GG_1$ is defined as 
\[
 \wt \GG_1(\al, \ol\al)=-\frac{1}{4}\sum_{n\in B(\kk_0)}|\al_n|^4+\frac{1}{4}\sum_{\substack{(n_1,n_2,n_3,n_4)\in
\II\\n_1-n_2+n_3=n_4, n_1,n_3\neq n_4\\\left|\rr_{n_1n_2n_3n_4}\right|\leq \eta}}\al_{n_1}\ol{\al_{n_2}}
\al_{n_3}\ol{\al_{n_4}},
\]
with
\[
 \II=\left\{(n_1,n_2,n_3,n_4): n_i\in\ZZ^2\text{ and at least one of the }n_i\text{'s satisfies }n_i\in B(\kk_0)\right\},
\]
$\wt \GG_2$ is defined as 
\[
\wt \GG_2(\al, \ol\al)=-\frac{1}{4}\sum_{n\in\ZZ^2\setminus B(\kk_0)}|\al_n|^4+\frac{1}{4}\sum_{\substack{n_i\in
\ZZ^2\setminus B(\kk_0), i=1,\ldots,4\\n_1-n_2+n_3=n_4,n_1,n_3\neq n_4 \\ \sum_{i=1}^4(-1)^{i}|n_i|^2=0}}\al_{n_1}\ol{\al_{n_2}}
\al_{n_3}\ol{\al_{n_4}}
\]
and $X_\RRR$, the vector field associated to
the Hamiltonian $\RRR$, satisfies
\[
 \|\XX_\RRR\|_{\ell^1}\leq \OO\left(\|\al\|_{\ell^1}^5\right).
\]
Moreover, the change $\Gamma$ satisfies
\[
 \left\|\Gamma-\mathrm{Id}\right\|_{\ell^1}\leq
\OO\left(\|\al\|_{\ell^1}^3\right).
\]
\end{theorem}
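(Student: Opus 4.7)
The strategy is the standard Birkhoff normal form: I would find a quartic Hamiltonian $\chi(\al,\ol\al)$ such that $\Gamma:=\phi_\chi^1$, its time-$1$ flow, is the desired change of variables. Since $\DDD$ is diagonal in $\{|\al_n|^2\}$, the Poisson bracket of $\DDD$ with a monomial $\al_{n_1}\ol{\al_{n_2}}\al_{n_3}\ol{\al_{n_4}}$ returns that monomial multiplied by $i\rr_{n_1n_2n_3n_4}$. The homological equation thus becomes a division problem: declare a set $\JJ^{\mathrm{elim}}$ of quartic indices to eliminate, namely those appearing in $\GG'$ (i.e.\ $n_1-n_2+n_3=n_4$, $n_1\neq n_4$, $n_3\neq n_4$) and \emph{not} retained in $\wt\GG_1\cup\wt\GG_2$, and set
\[
\chi(\al,\ol\al)=\frac{1}{4i}\sum_{(n_1,n_2,n_3,n_4)\in\JJ^{\mathrm{elim}}}\frac{\al_{n_1}\ol{\al_{n_2}}\al_{n_3}\ol{\al_{n_4}}}{\rr_{n_1n_2n_3n_4}}.
\]
The diagonal $-\tfrac{1}{4}|\al_n|^4$ pieces, on which $\rr\equiv0$, are left untouched by the normal form and split trivially into their $n\in B(\kk_0)$ and $n\notin B(\kk_0)$ parts inside $\wt\GG_1$ and $\wt\GG_2$.

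The critical step is to check that $|\rr_{n_1n_2n_3n_4}|$ is uniformly bounded below on $\JJ^{\mathrm{elim}}$. On $\II\cap\JJ^{\mathrm{elim}}$ (at least one low mode) this is the very definition of what we kept: $|\rr|>\eta$. On the complement, all four modes are high, and by construction these enter $\JJ^{\mathrm{elim}}$ only when $\sum_{i=1}^{4}(-1)^i|n_i|^2\neq0$. Since this quantity is an integer, it has absolute value at least $1$, and combining with $|v_{n_i}|\leq 1/100$ from \eqref{def:kappa0} gives
\[
|\rr_{n_1n_2n_3n_4}|\geq 1-\tfrac{4}{100}=\tfrac{96}{100}>\tfrac{1}{3}>\eta.
\]
This uniform lower bound is the essential reason the scheme works at all; it justifies the choice \eqref{def:kappa0} of $\kk_0$ and shows that no smallness assumption on $V$ is required.

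With this in hand, the vector field $X_\chi$ has components of the form $\sum_{n_1-n_2+n_3=n}(\ldots)\,\al_{n_1}\ol{\al_{n_2}}\al_{n_3}/\rr$, and the Young/convolution identity
\[
\sum_n\sum_{n_1-n_2+n_3=n}|\al_{n_1}||\al_{n_2}||\al_{n_3}|=\|\al\|_{\ell^1}^{3}
\]
gives $\|X_\chi\|_{\ell^1}\leq C\eta^{-1}\|\al\|_{\ell^1}^{3}$. A Picard iteration then shows that $\Gamma=\phi_\chi^1$ is well defined on a small $\ell^1$-ball around $0$ and that $\|\Gamma-\Id\|_{\ell^1}=\OO(\|\al\|_{\ell^1}^{3})$. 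Expanding via the Lie series,
\[
\HH'\circ\Gamma=\HH'+\{\HH',\chi\}+\int_0^1(1-t)\{\{\HH',\chi\},\chi\}\circ\phi_\chi^{t}\,dt,
\]
the degree-$4$ piece of the first two terms assembles into $\DDD+\wt\GG_1+\wt\GG_2$ by construction of $\chi$, while the remainder $\RRR$ (the bracket $\{\GG',\chi\}$ together with the integral term) has vector field of order $\|\al\|_{\ell^1}^{5}$, since commutators of cubic vector fields are quintic on $\ell^1$ by the same convolution estimate.

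The main obstacle I anticipate is bookkeeping rather than hard analysis: one has to verify that the partition of quartic indices from $\GG'$ into (kept in $\wt\GG_1$) / (kept in $\wt\GG_2$) / (eliminated) is a true partition covering all nontrivial monomials, handle the diagonal $|\al_n|^4$ terms separately since $\rr$ vanishes there, and enforce the symmetrization under $(n_1,n_3)$ and $(n_2,n_4)$ so that $\chi$ is real-valued and $\Gamma$ is a genuine symplectic map. Once this is done carefully, the small-divisor bound above and the convolution estimates combine to yield all the quantitative claims of the theorem.
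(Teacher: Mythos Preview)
Your proposal is correct and matches the paper's proof essentially line for line: the paper also takes $\Gamma$ to be the time-one map of a quartic generating Hamiltonian (called $F$ there, your $\chi$), defines it by dividing by $\rr_{n_1n_2n_3n_4}$ on exactly the index set you call $\JJ^{\mathrm{elim}}$, verifies the same lower bound on the divisors via the integer argument for the all-high case, and then uses the Lie-series expansion together with the $\ell^1$ convolution algebra to bound $X_\RRR$ and $\Gamma-\Id$. Your small-divisor bound $96/100$ is in fact sharper than the paper's stated $1/2$, but this is immaterial.
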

The proof of this theorem is postponed to Section \ref{app:NormalForm}.

Once we have used the gauge invariance and performed one step of partial Birkhoff normal form, we have a new vector field
\begin{equation}\label{eq:InfiniteODEAfterNormalForm}
\begin{split}
- i \dot \alpha_n =& \left(|n|^2 +v_n\right)\alpha_n-|\alpha_n|^2\alpha_n +
\sum_{(n_1,n_2,n_3)\in \AAA_0 (n)} \al_{n_1} \overline { \al_{n_2}}\al_{n_3}\\
&+\sum_{(n_1,n_2,n_3)\in \AAA_1 (n)} \al_{n_1} \overline { \al_{n_2}}\al_{n_3}+
2\partial_{\overline \al_n} \RRR,
\end{split}
\end{equation}
where the sets  $\AAA_0 (n)$ and $ \AAA_1 (n)$ are introduced in the next definition. 
\begin{definition}\label{definition:ResonantSets}
We define the sets $\AAA_0 (n)$ and $ \AAA_1 (n)$ as follows. 
\begin{itemize} \item For $n\in B(\kk_0)$
\begin{itemize}
\item[(i)] $(n_1,n_2,n_3)\in \AAA_0 (n)$ provided $n_1-n_2+n_3=n$, $n_1\neq n$, $n_3\neq n$,  $\left|\rr_{n_1n_2n_3n_4}\right|\leq \eta$
\item[(ii)] $\AAA_1(n)=\emptyset$.
\end{itemize}
 \item For $n\in \ZZ^2\setminus B(\kk_0)$
\begin{itemize}
 \item[(i)] $(n_1,n_2,n_3)\in \AAA_0 (n)$ provided $n_1-n_2+n_3=n$, $n_1\neq n$, $n_3\neq n$, $\left|\rr_{n_1n_2n_3n_4}\right|\leq \eta$ and at least one of the $n_i$ satisfies $n_i\in B(\kk_0)$.
 \item[(ii)] $(n_1,n_2,n_3)\in \AAA_1 (n)$ provided $n_1-n_2+n_3=n$,  $n_1\neq n$, $n_3\neq n$, $n_j\in\ZZ^2\setminus B(\kk_0)$, $j=1,2,3$ and $\sum_{j=1}^4(-1)^{j}|n_j|^2\neq 0$.
\end{itemize}
\end{itemize}
\end{definition}
To remove the linear part of this vector field, we perform a change to  rotating coordinates
\begin{equation}\label{def:rotatingcoord}
 \al_n=e^{i(|n|^2+v_n)t}\beta_n.
\end{equation}
Then, we obtain the vector field
\begin{equation}\label{eq:InfiniteODEARotating}
\begin{split}
- i \dot \beta_n =&-|\bet_n|^2\bet_n+ \sum_{(n_1,n_2,n_3)\in \AAA_0 (n)} \bet_{n_1} \overline { \bet_{n_2}}\bet_{n_3}e^{i\rr_{n_1n_2n_3n}t}\\
&+ \sum_{(n_1,n_2,n_3)\in\AAA_1(n)} \bet_{n_1} \overline { \bet_{n_2}}\bet_{n_3}e^{i\rr_{n_1n_2n_3n}t}+ \RRR_n'(\beta),
\end{split}
\end{equation}
where $\rr_{n_1n_2n_3n}$ has been defined in \eqref{def:SmallDivisor} and $\RRR'$ is the vector field associated to the Hamiltonian $\RRR(\{e^{i(|n|^2+v_n)t}\beta_n\})$.

We want to consider a ``good'' first order of this vector field. To this end we take into acount that for (resonant) nonlinear terms involving only high harmonics, thanks to the decay of the Fourier coefficients of $V$, one can choose the coefficients $\rr_{n_1n_2n_3n}$ to be very small. Thus, we rewrite this vector field as
\begin{equation}\label{eq:VFFinal}
\begin{split}
- i \dot \beta_n =& -|\bet_n|^2\bet_n+ \sum_{(n_1,n_2,n_3)\in \AAA_0 (n)} \bet_{n_1} \overline { \bet_{n_2}}\bet_{n_3}e^{i\rr_{n_1n_2n_3n}t}\\
&+ \sum_{(n_1,n_2,n_3)\in \AAA_1 (n)} \bet_{n_1} \overline { \bet_{n_2}}\bet_{n_3}+ \wt \RRR_n(\beta),
\end{split}
\end{equation}
where
\begin{equation}\label{def:FinalRemainder}
  \wt \RRR= \RRR'+\JJ,
\end{equation}
with
\begin{equation}\label{def:RemainderRotating}
\JJ_n(\bet)=\sum_{(n_1,n_2,n_3)\in \AAA_1 (n)} \bet_{n_1} \overline { \bet_{n_2}}\bet_{n_3}\left(e^{i\rr_{n_1n_2n_3n}t}-1\right).
\end{equation}
To obtain orbits undergoing growth of Sobolev norms for this vector field, we first consider the truncated vector field
\begin{equation}\label{eq:VFFinal:Truncated}
- i \dot \beta_n =-|\bet_n|^2\bet_n+  \sum_{(n_1,n_2,n_3)\in \AAA_0 (n)} \bet_{n_1} \overline { \bet_{n_2}}\bet_{n_3}e^{i\rr_{n_1n_2n_3n}t}+ \sum_{(n_1,n_2,n_3)\in \AAA_1 (n)} \bet_{n_1} \overline { \bet_{n_2}}\bet_{n_3}.
\end{equation}
We will study certain orbits of this vector field with growth of Sobolev norms and later on we will show that close to them are orbits of \eqref{eq:VFFinal} undergoing the same growth. To obtain such orbits for \eqref{eq:VFFinal:Truncated} we will use the instability mechanism developed in \cite{CollianderKSTT10}. In that paper, which as we have explained deals with equation \eqref{def:NLS}, the authors select a very cleverly chosen finite set of modes. The modes in this set on the one hand do not interact with the modes out of this set and on the other hand interact between themselves in a very particular way that leads to transfer of energy. To use this mechanism, we need to take into account that in equation \eqref{eq:VFFinal:Truncated} there are two types of interaction between modes. The first one is through tuples in
\[
\wt\AAA_0=\Big\{(n_1,n_2,n_3,n_4)\in\left(\ZZ^2\right)^4:\,(n_1,n_2,
n_3)\in\AAA_0(n_4)\Big\},
\]
which contain at least one low mode (see Definition \ref{definition:ResonantSets}). The second one is through tuples in 
\[
\wt\AAA_1=\Big\{(n_1,n_2,n_3,n_4)\in\left(\ZZ^2\right)^4:\,(n_1,n_2,
n_3)\in\AAA_0(n_1)\Big\},
\] 
which only contain high modes. As we have already mentioned, the geometry of the resonances differ considerably depending whether $(n_1,n_2,n_3,n_4)\in\wt \AAA_0$ or $(n_1,n_2,n_3,n_4)\in\wt \AAA_1$. In the second case, since we are omitting the influence of the convolution potential, the four points  form a
rectangle in $\ZZ^2$. These were the resonances studied in \cite{CollianderKSTT10} (and also in \cite{GuardiaK12}) since in these papers they consider equation \eqref{def:NLS} and therefore $v_n=0$. However, now the geometry of  tuples involving low harmonics  changes considerably.  To overcome this problem we take advantage of the fact that  the mechanism developed in   \cite{CollianderKSTT10} deals with orbits supported only in high harmonics and therefore they can be choosen to satisfy $|n|\geq \kk_0$, where $\kk_0$ is the constant in \eqref{def:kappa0}. Nevertheless, we  need to further modify the support to avoid interactions between high and low harmonics.

We reduce \eqref{eq:VFFinal:Truncated} to
a finite-dimensional system, which corresponds to an invariant
finite-dimensional plane.  We explain now the construction of this finite set of modes $\Lambda\subset\ZZ^2\setminus B(\kk_0)$, where $\kk_0$ is the constant defined in \eqref{def:kappa0}, and impose
additional conditions on $\Lambda$ from  the ones considered in \cite{CollianderKSTT10}. Fix $N\gg 1$.
Following \cite{CollianderKSTT10} we define a set
$\La\subset \ZZ^2\setminus B(\kk_0)$ consisting of $N$ pairwise disjoint \emph{generations}:
\[
 \Lambda=\Lambda_1\cup\ldots\cup\Lambda_N.
\]
Define a \emph{nuclear family} to be a rectangle $(n_1,n_2,n_3,n_4)\in\wt\AAA_1$,
such that $n_1$ and $n_3$ (known as the \emph{parents}) belong to a generation
$\Lambda_j$ and $n_2$ and $n_4$ (known as the \emph{children}) live in the next
generation $\Lambda_{j+1}$. Note that if $(n_1,n_2,n_3,n_4)$ is a nuclear
family, then so are  $(n_1,n_4,n_3,n_2)$,  $(n_3,n_2,n_1,n_4)$ and
$(n_3,n_4,n_1,n_2)$. These families are called trivial permutations of the
family $(n_1,n_2,n_3,n_4)$.

The conditions imposed to the set $\La$ in \cite{CollianderKSTT10}
 are
\begin{description}
\item[$1_\Lambda$\ {\it Closure}]\
If $n_1, n_2, n_3\in\Lambda$ and $(n_1,n_2,n_3)\in\wt \AAA_1(n)$,
then $n\in\Lambda$. That is, if three vertices of a rectangle are in $\La$ so is
    the last fourth one.
\item[$2_\Lambda$\ {\it Existence and uniqueness of spouse and children}]\
For any $1\leq j <N$ and any $n_1\in\Lambda_j$, there exists a unique nuclear
family $(n_1,n_2,n_3,n_4)$ (up to trivial permutations) such that $n_1$ is
a parent of this family. In particular, each $n_1\in\Lambda_j$ has
a unique spouse $n_3\in\Lambda_j$ and has two unique children
$n_2,n_4\in\Lambda_{j+1}$ (up to permutation).

\item[$3_\Lambda$\ {\it Existence and uniqueness of sibling and parents}]\
For any $1\leq j <N$ and any $n_2\in\Lambda_{j+1}$, there exists a unique
nuclear family $(n_1,n_2,n_3,n_4)$ (up to trivial permutations) such that
$n_2$ is a child of this family. In particular each $n_2\in\Lambda_{j+1}$
has a unique sibling $n_4\in\Lambda_{j+1}$ and two unique parents
$n_1,n_3\in\Lambda_{j}$ (up to permutation).

\item[$4_\Lambda$\ {\it Nondegeneracy}]\
The sibling of a frequency $n$ is never equal to its spouse.

\item[$5_\Lambda$\ {\it Faithfulness}]\
Apart from the nuclear families, $\Lambda$ does not contain any other rectangle.
\end{description}
To slow down the spreading of mass to high modes out of $\Lambda$. We impose the following condition, which was already considered in \cite{GuardiaK12} for the resonant sets of \eqref{def:NLS}.
\begin{description}
\item[$6_\Lambda$\ {\it No spreading to high harmonics}]\
Let us consider $n\in \ZZ^2\setminus \Lambda$. Then, there exists at most two tuples $(n_1,n_2,n_3,n_4)\in\wt\AAA_1$ (up to permutation) containing $n$  and two modes belonging to $\Lambda$.
\end{description}
Now we  have to impose the following conditions to avoid interactions between the modes of $\Lambda$ and the low harmonics in $B(\kk_0)$, which possess a completely different geometry of resonances.
\begin{description}
\item[$7_\Lambda$\ {\it Absence of low harmonics resonant interactions}]\
Take $n_1,n_2,n_3\in\Lambda$. Then, for any $n\in B(\kk_0)\subset\ZZ^2$, one has that $(n_1,n_2,n_3,n)\not\in \wt\AAA_0$ (nor any permutation of it).
\item[$8_\Lambda$\ {\it No spreading to low harmonics}]\
Let us consider $n\in \ZZ^2\setminus \Lambda$. Then, there exist no  tuples $(n_1,n_2,n_3,n)\in \wt \AAA_0$ containing $n$ and two modes belonging to $\Lambda$ and the same happens for any permutation of the tuple.
\end{description}

\begin{theorem}\label{thm:SetLambda}
Let $\KK\gg 1$. Then, there exists
$N\gg 1$ large and a set $\Lambda\subset\ZZ^2$, with
\[
 \Lambda=\Lambda_1\cup\ldots\cup\Lambda_N,
\]
which satisfies  conditions $1_\Lambda$ -- $8_\Lambda$ and also
\begin{equation}\label{def:Growth}
 \frac{\sum_{n\in\Lambda_{N-1}}|n|^{2s}}{\sum_{n\in\Lambda_3}|n|^{2s}}\geq
 \dfrac 12 2^{(s-1)(N-4)}\ge \KK^2.
\end{equation}
Moreover,  we can ensure that each generation $\Lambda_j$ has $2^{N-1}$ disjoint frequencies $n$ satisfying
\[
e^{\eta_1 N^2}\leq |n|\leq e^{\eta_2 N^2},
\]
for certain constants $\eta_1,\eta_2>0$ independent of $\KK$.
\end{theorem}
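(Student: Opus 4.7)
The plan is to start from the set $\Lambda_0 \subset \ZZ^2 \setminus B(\kk_0)$ constructed in \cite{CollianderKSTT10} and refined in \cite{GuardiaK12}, which has $N$ generations of $2^{N-1}$ disjoint frequencies and already satisfies conditions $1_\Lambda$--$6_\Lambda$ together with the growth estimate \eqref{def:Growth}, and to obtain the desired $\Lambda$ as an integer affine image $\Lambda = R\Lambda_0 + v$. The dilation factor $R \in \ZZ_{>0}$ will simultaneously deliver the frequency lower bound and render the linear part of any prospective $\wt\AAA_0$-resonance untenable, while the translation $v \in \ZZ^2 \setminus B(\kk_0)$ will keep $\Lambda$ decoupled from the low-mode reservoir.

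The key elementary observation is that both defining relations of a nuclear family, namely $n_1 - n_2 + n_3 = n_4$ and $|n_1|^2 - |n_2|^2 + |n_3|^2 - |n_4|^2 = 0$, are preserved by integer translations (using $\sum_i (-1)^{i+1} = 0$) and by integer dilations. Hence the rectangle-based conditions $1_\Lambda$--$5_\Lambda$ pass verbatim from $\Lambda_0$ to $\Lambda$. For $6_\Lambda$ I would choose $R$ to be a large prime congruent to $3 \pmod 4$; for such $R$ the only lattice vectors $r \in \ZZ^2$ with $R \mid |r|^2$ lie in $R\ZZ^2$, so that a $\wt\AAA_1$-rectangle with three vertices in $\Lambda$ forces its fourth vertex into the coset $R\ZZ^2 + v$, reducing the count to the one already established for $\Lambda_0$. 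Taking $R$ of order $e^{\eta_1 N^2}/\min_{m \in \Lambda_0}|m|$ yields $|n| \ge e^{\eta_1 N^2}$, and the inherent upper bound $\max_{m \in \Lambda_0}|m| \le e^{c_0 N^2}$ of the \cite{CollianderKSTT10} construction delivers $\max|n| \le e^{\eta_2 N^2}$ with $\eta_2 := \eta_1 + c_0$.

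To verify $7_\Lambda$, suppose for contradiction that some permutation of $(n_1, n_2, n_3, n)$ lies in $\wt\AAA_0$ with $n_1, n_2, n_3 \in \Lambda$ and $n \in B(\kk_0)$. The linear part provides signs $\epsilon_i \in \{\pm 1\}$ with $\sum_i \epsilon_i = 0$ and $\sum_i \epsilon_i m_i = 0$ for the four vertices $\{m_i\}$. Writing the three vertices in $\Lambda$ as $R n_i' + v$ and using $\sum_i \epsilon_i = 0$, this collapses to
\[
R \sum_{i \in \Lambda} \epsilon_i n_i' = \pm (v - n).
\]
Choosing $|v| \ge \kk_0$ (so that $v \notin B(\kk_0)$) together with $R > |v| + \kk_0$ makes the right-hand side have norm strictly less than $R$, whereas the left-hand side is either zero or a nonzero integer multiple of $R$. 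Hence $\sum_i \epsilon_i n_i' = 0$ and $v = n$, the latter contradicting $v \notin B(\kk_0)$. Condition $8_\Lambda$ follows by a parallel argument: any tuple in $\wt\AAA_0$ with two $\Lambda$-vertices and $n \in \ZZ^2 \setminus \Lambda$ must, by the very definition of $\wt\AAA_0$, contain a low-mode entry somewhere, which can only be $n$ itself or the fourth free vertex since $\Lambda \cap B(\kk_0) = \emptyset$; in either case the same $R$-divisibility computation pins the low mode to $\pm v$.

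The main technical obstacle I anticipate is the careful verification that the sporadic configurations in condition $6_\Lambda$ arising from the quadratic energy constraint really are ruled out by the modular arithmetic of a prime $R \equiv 3 \pmod 4$, rather than merely by the linear coset argument: one has to run through the quadratic residue computation for all $n \notin R\ZZ^2 + v$ and confirm that no $\wt\AAA_1$ rectangles appear beyond those inherited from $\Lambda_0$. This parallels the quadratic counting already performed in \cite{GuardiaK12}, so while the bookkeeping is delicate it is essentially routine. Once this is settled, conditions $7_\Lambda$, $8_\Lambda$ and the frequency bounds fall out of the clean divisibility argument above, and the growth estimate \eqref{def:Growth} passes through unchanged since both dilation and translation preserve the ratio of $s$-weighted sums between generations.
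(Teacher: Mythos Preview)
Your approach is genuinely different from the paper's. The paper uses pure integer dilations of the base set $\Lambda^{(0)}$ (by two successive factors $C_1$ and then $C_2\sim e^{\eta_2' N^2}$) and verifies $7_\Lambda$, $8_\Lambda$ by showing that the \emph{resonance} bound $|\rho_{n_1n_2n_3n_4}|\le\eta$ fails; for this it also imposes two extra pairwise nondegeneracy conditions on the base set (namely $|n_1+n_2|\ne|n_1-n_2|$ and $(n_2-n_1)\cdot n_1\ne 0$ for every pair). You instead introduce a translation $v$ and attempt to dispose of $7_\Lambda$, $8_\Lambda$ via the \emph{linear} momentum constraint and divisibility by $R$. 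For $7_\Lambda$ this works and is arguably cleaner than the paper's argument: with three vertices in $R\Lambda_0+v$ and one low vertex $n$, the identity $R\sum_i\eps_i n_i' = \pm(v-n)$ together with $R>|v|+\kk_0$ forces $n=v\notin B(\kk_0)$, a contradiction obtained purely from momentum.

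There is, however, a real gap in your $8_\Lambda$ argument. With only \emph{two} vertices $p=Rp'+v$, $q=Rq'+v$ in $\Lambda$, the momentum relation reads
\[
R(\eps_a p'+\eps_b q') + (\eps_a+\eps_b)v + \eps_c n + \eps_d m = 0,
\]
and now both $n$ and $m$ are unknowns. Nothing in this identity prevents one of them from lying in $B(\kk_0)$: given any $n\in B(\kk_0)\subset\ZZ^2\setminus\Lambda$ and any $p,q\in\Lambda$, simply define $m$ by the linear relation and the equation is solved; divisibility by $R$ yields no contradiction whatsoever. Your assertion that ``the same $R$-divisibility computation pins the low mode to $\pm v$'' is therefore incorrect. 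To rule out such tuples one is forced to show that the near-resonance condition $|\rho|\le\eta$ fails, and this is exactly where the paper's argument becomes substantive: it uses the extra nondegeneracy conditions on $\Lambda^{(0)}$ together with the large second dilation $C_2\sim e^{\eta_2' N^2}$ to make the quadratic part of $\rho$ dominate the potential terms and the low-mode contribution. Your scheme can presumably be repaired by grafting on such a resonance estimate, but the linear divisibility argument alone does not suffice for $8_\Lambda$. As a minor aside, translation by $v$ does not \emph{exactly} preserve the ratio $S_{N-1}/S_3$; you should remark that $|v|/(R\min_{n'\in\Lambda_0}|n'|)$ is exponentially small in $N$, so \eqref{def:Growth} survives after a harmless adjustment of constants.
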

The construction of such set $\Lambda$  was first done in 
 \cite{CollianderKSTT10} for \eqref{def:NLS} (see Proposition 2.1 of that paper). In \cite{GuardiaK12} the construction was refined adding condition $6_\Lambda$ but still for equation \eqref{def:NLS}. In Section \ref{app:SmallSobolev} we show how to adapt the construction of $\Lambda$ to the present setting and we prove Theorem \ref{thm:SetLambda}.

As explained in  \cite{CollianderKSTT10}, thanks to  Properties $1_\Lambda$ and $7_\La$ satisfied by the set $\Lambda$,  the manifold
\[
\MM=\left\{\beta\in\CC^{\ZZ^2}: \beta_n=0\,\,\text{for all }
n\not\in\Lambda\right\}
\]
is invariant by the flow associated to \eqref{eq:VFFinal:Truncated}. This manifold has  finite dimension which  is  equal to $|\Lambda|=N2^{N-1}$.    Equation  \eqref{eq:VFFinal:Truncated}  restricted to $\MM$
reads
\begin{equation}\label{eq:InftyODE:FirstFiniteReduction}
 -i\dot\bet_n=-\bet_n|\bet_n|^2+2
\beta_{n_{\mathrm{child}_1}}\beta_{n_{\mathrm{child}_2}}\ol{\beta_{n_\mathrm{spouse}}}+2
\beta_{n_{\mathrm{parent}_1}}\beta_{n_{\mathrm{parent}_2}}\ol{\beta_{n_\mathrm{sibling}}}.
\end{equation}
for each $n\in \La$. The presence of parents, children, and the sibling are guaranteed by $2_\La$
and $3_\La$. In the first and last generations, the parents and
children are set to zero respectively. Note that this system coincides with  system (2.14) in \cite{CollianderKSTT10} and (16) in  \cite{GuardiaK12}.  
 To analyze this first order supported on the set $\Lambda$, we just  recall the results obtained in \cite{CollianderKSTT10, GuardiaK12}. The first step is to take into account that the manifold $\MM$ has a submanifold of
considerably lower dimension which is also invariant.

\begin{corollary}[\cite{CollianderKSTT10}]\label{coro:Invariant}
Consider the subspace
\[
 \wt \MM=\left\{\beta\in\MM: \beta_{n_1}=\beta_{n_2}\,\,\text{for all
}n_1,n_2\in\Lambda_j\,\,\text{for some }j\right\},
\]
where all the members of a generation take the same value. Then, $\wt \MM$ is invariant under the flow associated to \eqref{eq:InftyODE:FirstFiniteReduction}.
\end{corollary}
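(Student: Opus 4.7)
The plan is to show that the vector field on the right-hand side of \eqref{eq:InftyODE:FirstFiniteReduction}, when evaluated at a point of $\wt\MM$, depends only on the generation index $j$ of the mode $n$, so that all components indexed by a common generation have a common time derivative. Combined with uniqueness of solutions to the finite-dimensional ODE \eqref{eq:InftyODE:FirstFiniteReduction} on $\MM$, this implies invariance of $\wt\MM$.

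First I would fix a point $\beta\in\wt\MM$ and denote by $b_j\in\CC$ the common value of $\bet_n$ for $n\in\Lambda_j$, so that $\beta$ is determined by the $N$-tuple $(b_1,\ldots,b_N)$. Next I would take any $n\in\Lambda_j$ and inspect the four types of terms appearing in \eqref{eq:InftyODE:FirstFiniteReduction}. Property $2_\Lambda$ gives a unique spouse $n_\mathrm{spouse}\in\Lambda_j$ and unique (up to permutation) children $n_{\mathrm{child}_1},n_{\mathrm{child}_2}\in\Lambda_{j+1}$ associated to $n$; property $3_\Lambda$ gives a unique sibling $n_\mathrm{sibling}\in\Lambda_j$ and unique parents $n_{\mathrm{parent}_1},n_{\mathrm{parent}_2}\in\Lambda_{j-1}$ associated to $n$. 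Evaluating at $\beta\in\wt\MM$ the right-hand side becomes
\[
-i\dot\bet_n \;=\; -b_j|b_j|^2 \;+\; 2\,b_{j+1}^{2}\,\overline{b_j} \;+\; 2\,b_{j-1}^{2}\,\overline{b_j},
\]
with the convention that $b_0=b_{N+1}=0$ coming from the statement that in the first and last generations the parents, respectively children, are set to zero. The crucial point is that this expression is independent of the particular $n\in\Lambda_j$.

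Therefore, if we let $F_j(b_1,\ldots,b_N)$ denote this common value, the ansatz $\bet_n(t)=b_j(t)$ for $n\in\Lambda_j$ reduces \eqref{eq:InftyODE:FirstFiniteReduction} to the $N$-dimensional system $-i\dot b_j = F_j(b_1,\ldots,b_N)$, which possesses local solutions for any initial datum $(b_1(0),\ldots,b_N(0))$. These produce solutions of \eqref{eq:InftyODE:FirstFiniteReduction} lying in $\wt\MM$ for all time at which they exist; by uniqueness of solutions of the finite-dimensional ODE on $\MM$, any trajectory of \eqref{eq:InftyODE:FirstFiniteReduction} starting in $\wt\MM$ must coincide with such a solution, establishing the invariance of $\wt\MM$.

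There is essentially no serious obstacle: everything is built into the combinatorial properties of $\Lambda$. The only point requiring a moment of care is to make sure that no \emph{other} cubic contribution to $\dot\bet_n$ appears beyond the child/spouse and parent/sibling terms written in \eqref{eq:InftyODE:FirstFiniteReduction}; this is exactly what is encoded by Properties $1_\Lambda$, $5_\Lambda$ and $7_\Lambda$ (together with $\Lambda\subset\ZZ^2\setminus B(\kk_0)$), which guarantee respectively that $\MM$ is invariant under \eqref{eq:VFFinal:Truncated}, that no extra rectangles contribute through $\wt\AAA_1$, and that no contribution coming from $\wt\AAA_0$ couples $\Lambda$-modes to low modes. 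These closure properties are exactly what make the reduction to \eqref{eq:InftyODE:FirstFiniteReduction} valid, and with that in hand the invariance of $\wt\MM$ is immediate.
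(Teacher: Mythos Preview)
Your proposal is correct and is exactly the natural argument: the combinatorial properties $2_\Lambda$ and $3_\Lambda$ force the right-hand side of \eqref{eq:InftyODE:FirstFiniteReduction} evaluated on $\wt\MM$ to depend only on the generation index, and uniqueness for the finite-dimensional ODE yields invariance. The paper itself gives no proof of this corollary, attributing it to \cite{CollianderKSTT10}; your argument is the standard one used there.
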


The dimension of  $\wt \MM$ is equal to the number of generations, namely $N$.
To define equation
\eqref{eq:InftyODE:FirstFiniteReduction} restricted to $\wt \MM$, we consider
\begin{equation}\label{def:ChangeToToyModel}
 b_j=\beta_n\,\,\,\text{ for any }n\in\Lambda_j.
\end{equation}
Then \eqref{eq:InftyODE:FirstFiniteReduction}  restricted to $\wt \MM$ becomes
\begin{equation}\label{def:model}
 \dot  b_j=-ib_j^2\ol b_j+2i \ol
b_j\left(b_{j-1}^2+b_{j+1}^2\right),\,\,j=0,\ldots N.
\end{equation}
This model was first derived in \cite{CollianderKSTT10} and was studied in great detail in that paper and in \cite{GuardiaK12}.

\begin{theorem}[\cite{GuardiaK12}]\label{thm:ToyModelOrbit}
Fix a large $\gamma\gg 1$.  Then for any large enough $N$ and
$\de=e^{-\ga N}$, there exists an orbit of system \eqref{def:model},
$\nu>0$ and $T_0>0$ such that
\[
\begin{split}
 |b_3(0)|&>1-\de^\nu\\
|b_j(0)|&< \de^\nu\qquad\text{ for }j\neq 3
\end{split}
\qquad \text{ and }\qquad
\begin{split}
 |b_{N-1}(T_0)|&>1-\de^\nu\\
|b_j(T_0)|&<\de^\nu \qquad\text{ for }j\neq N-1.
\end{split}
\]
Moreover, there exists a constant $\KKK>0$
independent of  $N$ such that $T_0$ satisfies
\begin{equation}\label{def:Time:ToyModel}
 0<T_0<\KKK N \ln \left(\frac 1 \de \right)=\KKK\,\ga\,N^2.
\end{equation}
\end{theorem}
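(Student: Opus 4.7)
The plan is to exploit the rich hyperbolic structure of the toy model \eqref{def:model}, viewing it as a cascade of heteroclinic connections between invariant circles supported in a single generation. Observe that for each $j$, the circle $\TTT_j = \{|b_j| = 1,\ b_k = 0\text{ for } k\neq j\}$ is invariant under \eqref{def:model}, and the restricted flow reads $\dot b_j = -i b_j$, so $\TTT_j$ is a periodic orbit of period $2\pi$. Moreover, the two-dimensional coordinate plane $\Pi_j = \{b_k = 0\text{ for } k\notin\{j,j+1\}\}$ is invariant under \eqref{def:model} and reduces to a planar Hamiltonian system in $(b_j, b_{j+1})$ which admits an explicit heteroclinic orbit from $\TTT_j$ to $\TTT_{j+1}$; this is the ``sliding'' trajectory already exhibited in \cite{CollianderKSTT10}.

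First I would carry out a careful local analysis near each $\TTT_j$: linearizing the flow \eqref{def:model} in the normal directions shows that the tangent direction along $\TTT_j$ is purely rotational while the normal directions split into hyperbolic pairs with explicit eigenvalues that are independent of $N$ and $j$. Thus $\TTT_j$ is a normally hyperbolic invariant circle whose stable manifold contains the $\Pi_{j-1}$-heteroclinic arriving from $\TTT_{j-1}$ and whose unstable manifold contains the $\Pi_j$-heteroclinic leaving towards $\TTT_{j+1}$. The key quantitative fact is that the expansion and contraction rates are $O(1)$ uniformly in $j$, so any orbit that passes through a $\de^{\nu}$-neighborhood of $\TTT_j$ spends time $O(\ln(1/\de))$ near it and time $O(1)$ traversing each heteroclinic segment.

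The heart of the proof is a shadowing argument that concatenates the chain $\TTT_3 \to \TTT_4 \to \cdots \to \TTT_{N-1}$ into a single true orbit. I would construct, inductively in $j$, a small box transverse to the local unstable manifold of $\TTT_j$ and show, using an inclination ($\la$-)lemma adapted to normally hyperbolic circles, that its image under the flow meets the local stable manifold of $\TTT_{j+1}$ transversally; choosing the free phase parameter at each intersection allows the iteration to continue. The orbit so produced stays within $\de^{\nu}$ of $\TTT_3$ at time $0$, within $\de^{\nu}$ of $\TTT_{N-1}$ at time $T_0$, and accumulates total time $O(N\ln(1/\de)) = O(\ga N^2)$, giving \eqref{def:Time:ToyModel}.

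The main obstacle is the uniform-in-$N$ quantitative control of the shadowing: a naive application of the $\la$-lemma loses a definite contraction factor at each step, so iterating $N$ times could cause a catastrophic accumulation of errors. Overcoming this requires the sharper tracking developed in \cite{GuardiaK12}, where one shadows not the exact heteroclinics but carefully chosen approximate ``pseudo-orbits'' glued from the elementary heteroclinic pieces, and controls the deviation through a Gronwall estimate that is merely linear in $N$ because the toy-model vector field on $\wt\MM$ couples only nearest-neighbor generations. Once this quantitative shadowing is in place, the precise estimates $|b_3(0)|, |b_{N-1}(T_0)| > 1 - \de^\nu$ and $|b_j| < \de^\nu$ elsewhere follow by choosing $\de = e^{-\ga N}$ with $\ga$ large enough that the accumulated error stays below $\de^\nu$.
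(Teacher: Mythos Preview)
The paper does not prove this theorem; it is quoted from \cite{GuardiaK12} and the only information given here is the one-line remark that it ``is proved in \cite{GuardiaK12} using dynamical systems tools such as normal forms and the Shilnikov boundary problem.'' Your outline is consistent with that summary: you correctly identify the invariant periodic orbits $\TTT_j$, the explicit heteroclinic connections in the two-mode planes $\Pi_j$, the uniform normal hyperbolicity, and the resulting $O(\ln(1/\de))$ local passage time that yields the global bound $T_0 = O(N\ln(1/\de))$.

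The one substantive methodological difference worth flagging is that the actual proof in \cite{GuardiaK12} does not proceed via an inclination/$\la$-lemma argument as you propose, but rather by putting the flow near each $\TTT_j$ into a normal form and then solving a Shilnikov-type boundary value problem to glue the local passages together. You yourself identify the weakness of the $\la$-lemma route---a naive iteration loses a factor at each of the $N$ steps---and then appeal to ``the sharper tracking developed in \cite{GuardiaK12}''; that sharper tracking \emph{is} the normal form plus Shilnikov scheme, not a refined $\la$-lemma or a Gronwall argument on pseudo-orbits. The Shilnikov approach buys precisely the uniform-in-$N$ control you need: at each step one solves for the exact orbit hitting prescribed entry and exit sections, so errors do not propagate multiplicatively through the chain. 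Your sketch is therefore correct at the level of the geometric picture and the time accounting, but the mechanism you describe for the concatenation step is not the one actually used.
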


This theorem is proved in \cite{GuardiaK12} using dynamical systems tools such as normal forms and the Shilnikov boundary problem.


From the orbit obtained in Theorem \ref{thm:ToyModelOrbit} and using the change
  \eqref{def:ChangeToToyModel} one can obtain an orbit of equation \eqref{eq:VFFinal:Truncated}. Moreover, both equations \eqref{eq:VFFinal:Truncated} and
\eqref{def:model} are invariant under  rescaling
\begin{equation}\label{def:Rescaling}
b^\lambda(t)=\lambda^{-1}b\left(\lambda^{-2}t\right).
\end{equation}
Then, by Theorem \ref{thm:ToyModelOrbit}, the time spent by the solution $b^\lambda(t)$ is
\begin{equation}\label{def:Time:Rescaled}
 T=\la^2 T_0\le \la^2\,\KKK\,\ga\,N^2,
\end{equation}
where $T_0$ is the time obtained in Theorem \ref{thm:ToyModelOrbit}.

We see now that there is a solution
of equation \eqref{eq:VFFinal}
which is close to the orbit $\beta^\la$ of
\eqref{eq:VFFinal:Truncated} defined as
\begin{equation}\label{def:RescaledApproxOrbit}
\begin{split}
\beta^\la_n(t)&=\lambda^{-1}b_j\left(\lambda^{-2}t\right)
\,\,\,\text{ for each }\ n\in\Lambda_j\\
\beta^\la_n(t)&=0\,\,\,\text{ for each }\ n\not\in\Lambda,
\end{split}
\end{equation}
where $b(t)$ is the orbit given by Theorem \ref{thm:ToyModelOrbit}. To obtain this approximation result we need to consider a large $\lambda$. The reason, as it is explained in \cite{CollianderKSTT10, GuardiaK12}, is that in order the original system to be well
approximated by the truncated one, we need the cubic terms  in
\eqref{eq:VFFinal} to dominate over
the quintic ones that are in the remainder $\wt \RRR$.  Nevertheless, due to \eqref{def:Time:Rescaled}, the bigger $\la$, the slower
the instability
time. The choice of  $\la$ (with respect to $N$) that we have to do is different for the proof of Theorems \ref{thm:main} and \ref{thm:mainslow}. Both choices are the same as the ones done in \cite{GuardiaK12}. We show here how to prove Theorem \ref{thm:main} and at the end of the section we will show how to adapt the proof to deal with Theorem \ref{thm:mainslow}.

The approximation result needed to prove Theorem \ref{thm:main} is the following.
\begin{theorem}\label{thm:Approximation}
Let  $\bet^\la(t)=\{\bet^\la_n(t)\}_{n\in \ZZ^2}$ be the solution of
\eqref{eq:VFFinal:Truncated} given
by \eqref{def:RescaledApproxOrbit}
and $T$  the time defined in \eqref{def:Time:Rescaled}. Consider $\wt \bet(t)=\{\wt\bet_n(t)\}_{n\in \ZZ^2}$
 the solution of \eqref{eq:VFFinal} with the same initial condition as $\bet^\la$, that is $\wt \bet(0)=\bet^\la(0)$.
Then, there exist a constant $\kk>0$ independent of $N$ and $\ga$ such that, for
\begin{equation}\label{def:LambdaOfN}
 \la=e^{\kk\ga N},
\end{equation}
and $0<t<T$ we have
\begin{equation}\label{eq:Approx:BoundDiff}
\sum_{n\in \ZZ^2}
\left|\wt \bet_n(t)-\beta^\la_n(t)\right|
 \le \la^{-2}.
\end{equation}
\end{theorem}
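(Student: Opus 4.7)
The plan is a bootstrap argument in $\ell^1$ for the difference $\xi(t):=\wt\bet(t)-\bet^\la(t)$, which vanishes at $t=0$. Subtracting \eqref{eq:VFFinal:Truncated} from \eqref{eq:VFFinal} gives
\begin{equation*}
-i\dot\xi_n = \bigl[F(\wt\bet)-F(\bet^\la)\bigr]_n + \JJ_n(\wt\bet)+\RRR'_n(\wt\bet),
\end{equation*}
where $F$ collects $-|\bet_n|^2\bet_n$ together with the $\AAA_0$- and $\AAA_1$-sums of the truncated equation. I would suppose $\|\xi(t^\ast)\|_{\ell^1}=\la^{-2}$ at some minimal $t^\ast\in(0,T]$ and derive a strict inequality at $t^\ast$, contradicting minimality. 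The overall scheme follows the approximation argument of \cite{GuardiaK12} for the potential-free case; the new ingredient is the estimate of $\JJ$.

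Under the bootstrap, $\|\wt\bet\|_{\ell^1}\le \|\bet^\la\|_{\ell^1}+\la^{-2}$, and Theorem \ref{thm:ToyModelOrbit} together with \eqref{def:RescaledApproxOrbit} gives $\|\bet^\la\|_{\ell^\infty}\le C\la^{-1}$. Trilinear expansion of $F(\wt\bet)-F(\bet^\la)$ produces $F(\xi,\wt\bet,\wt\bet)$-type terms whose $\ell^1$ norm I would bound using the local nuclear-family structure of $\La$ encoded in conditions $1_\La$--$8_\La$ of Theorem \ref{thm:SetLambda}: each mode interacts with only $O(1)$ others via $\AAA_0\cup\AAA_1$, so the linearized cubic operator has $\ell^1\to\ell^1$ norm of order $\la^{-2}$ rather than the naive Young bound $\|\bet^\la\|_{\ell^1}^2$. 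Gronwall then yields a factor that, together with the bound $T\le \la^2\KKK\ga N^2$ from \eqref{def:Time:Rescaled}, is absorbable by choosing $\la$ large. For the quintic, Theorem \ref{thm:NormalForm} gives $\|\RRR'(\wt\bet)\|_{\ell^1}\le C\|\wt\bet\|_{\ell^1}^5$, integrating over $[0,T]$ to a source of size controllable by $\la^{-3}$ times tame factors.

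The new ingredient is the oscillatory term $\JJ$ from \eqref{def:RemainderRotating}. For tuples $(n_1,n_2,n_3)\in\AAA_1(n)$ all supported on $\La$, i.e.\ rectangles whose four vertices lie in $\La$ and hence satisfy $\sum_j(-1)^{j+1}|n_j|^2=0$ by the nuclear-family structure, the bound $|v_n|\le C\|V\|_{H^{s_0}}|n|^{-s_0}$ combined with $|n|\ge e^{\eta_1 N^2}$ from Theorem \ref{thm:SetLambda} gives $|\rr_{n_1n_2n_3n}|\le C e^{-\eta_1 s_0 N^2}$, so $|e^{i\rr s}-1|\le |\rr|\,s$ is exponentially small in $N^2$ uniformly on $[0,T]$. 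For the remaining tuples in $\AAA_1$ with at least one mode outside $\La$ (still in $\ZZ^2\setminus B(\kk_0)$), the divisor $|\rr|$ is only lower-bounded by a constant via \eqref{def:kappa0} and the integer nature of $\sum_j(-1)^{j+1}|n_j|^2$; integration by parts in $s$,
\begin{equation*}
\int_0^t f(s)\,e^{i\rr s}\,ds=\Bigl[\tfrac{e^{i\rr s}}{i\rr}f(s)\Bigr]_0^t-\int_0^t \tfrac{e^{i\rr s}}{i\rr}f'(s)\,ds,
\end{equation*}
applied with $f(s)=\wt\bet_{n_1}\ol{\wt\bet_{n_2}}\wt\bet_{n_3}$, trades the oscillation for a factor $1/|\rr|=O(1)$, with boundary and volume terms each of order $\|\wt\bet\|_{\ell^\infty}^3=O(\la^{-3})$; property $6_\La$ keeps the count of such tuples per mode bounded.

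Combining all estimates and applying Gronwall, $\|\xi(t)\|_{\ell^1}$ is controlled by $\la^{-3}$ times prefactors that are polynomial in $N,\ga$ and at worst exponential in $N$. Choosing $\la=e^{\kk\ga N}$ with $\kk$ large enough but $N,\ga,\KK$-independent absorbs all these, yielding $\|\xi(t)\|_{\ell^1}\le \tfrac12\la^{-2}$ strictly and closing the bootstrap to establish \eqref{eq:Approx:BoundDiff}. The hardest point is precisely this balancing: the source, the Gronwall exponent, and the $N$-dependent prefactors must all be absorbed into a single power of $\la^{-1}=e^{-\kk\ga N}$; this is only possible because the nuclear-family structure of $\La$ keeps the effective cubic coefficient at $O(\la^{-2})$ rather than $O(\|\bet^\la\|_{\ell^1}^2)$, and because the resonance divisors $\rr$ on $\La$-supported tuples are exponentially small in $N^2$ from the Sobolev decay of $V$.
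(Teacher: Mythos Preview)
Your bootstrap scheme is the right framework, and your treatment of the source $\JJ(\bet^\la)$ via the smallness of $\rr$ on $\La$-supported tuples is correct (note that since $\bet^\la$ is supported on $\La$, only on-$\La$ tuples occur in the source, so the off-$\La$ case never arises there).

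There is, however, a genuine gap in the linear step. Your pointwise-in-$t$ bound for the linearized cubic operator, $\|D\EE(\bet^\la)\xi\|_{\ell^1}\le C\la^{-2}\|\xi\|_{\ell^1}$, is correct (conditions $5_\La$, $6_\La$, $8_\La$ do give $O(1)$ interacting pairs per mode, hence operator norm $O(\|\bet^\la\|_{\ell^\infty}^2)=O(\la^{-2})$). But feeding this into Gronwall over $[0,T]$ with $T\le\la^2\KKK\ga N^2$ produces an exponent $\int_0^T C\la^{-2}\,dt=C\ga N^2$, and the factor $e^{C\ga N^2}$ \emph{cannot} be absorbed by $\la=e^{\kk\ga N}$ for any $N$-independent $\kk$. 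The paper (Lemma \ref{lemma:Approx:BoundLinear}, relying on Lemma B.2 of \cite{GuardiaK12}) instead proves a \emph{time-integrated} bound: writing $\|\ZZZ^1(t)\xi\|_{\ell^1}\le\sum_n f_n(t)|\xi_n|$, one has $\int_0^T f_n(t)\,dt\le C\ga N$ rather than $C\ga N^2$. The saving of one power of $N$ comes from the specific time profile of the toy-model orbit of Theorem \ref{thm:ToyModelOrbit}: after undoing the rescaling, $f_n$ is a product $|b_{k_2}(s)||b_{k_3}(s)|$ for fixed generations $k_2,k_3$, and these are simultaneously non-negligible only over a window of length $O(\ln(1/\de))=O(\ga N)$, not over all of $T_0\sim\ga N^2$. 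With this refinement the Gronwall factor becomes $e^{C\ga N}$, which \emph{is} absorbable by choosing $\kk>C$.

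A secondary point: your integration-by-parts argument for $\JJ$ on tuples with a mode outside $\La$ rests on a false premise. For any $(n_1,n_2,n_3)\in\AAA_1(n)$ one has $\sum_j(-1)^{j+1}|n_j|^2=0$ by definition, so $\rr_{n_1n_2n_3n}=\sum_j(-1)^{j+1}v_{n_j}$, which by \eqref{def:kappa0} is \emph{small} (at most $4/100$), not bounded below; the $1/\rr$ gain is unavailable. In the paper the $D\JJ(\bet^\la)$ contribution is simply folded into $\ZZZ^1$ with the trivial bound $|e^{i\rr t}-1|\le 2$ and controlled by the same time-integrated lemma.
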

The proof of this theorem is deferred to Section \ref{app:Approx}. Now we are ready to prove Theorem \ref{thm:main}. We proceed as in \cite{GuardiaK12}.



\begin{proof}[Proof of Theorem \ref{thm:main}]
Using the change of variables \eqref{def:rotatingcoord} and the change $\Gamma$ obtained in Theorem \ref{thm:NormalForm}, from the solution $\wt\bet(t)$  obtained in Theorem \ref{thm:Approximation} we define  
\[
r(t)=\Gamma \left(\left\{\wt\bet_n(t) e^{i(|n|^2+v_n)t}\right\}\right),
\]
which is a solution of system \eqref{eq:NLSAfterGauge}. Note that the change \eqref{def:GaugeChange}, which relates $r$ and the original variable $a$ preserves the Sobolev norms and therefore, to prove Theorem \ref{thm:main}, one only needs to check  the stated properties on   $r(t)$  instead of on $a(t)$.

To compute the growth of Sobolev norm of the orbit $r(t)$, we define
\begin{equation}\label{def:Sums}
 S_j=\sum_{n\in\Lambda_j}|n|^{2s}  \text{ for }j=1,\dots, N-1.
\end{equation}
We want to prove that
\[
\frac{\|r(T)\|_{H^s}}{\|r(0)\|_{H^s}}\gtrsim \KK
\]
and estimate the mass $\|r(0)\|_{L^2}$ of the solution. Since we want to use the estimate obtained in Theorem \ref{thm:SetLambda}, we start by bounding $\|r(T)\|_{H^s}$ in terms of
$S_{N-1}$ as
\[
\left \|r(T)\right\|^2_{H^s}\geq \sum_{n\in\Lambda_{N-1}}|n|^{2s} \left|r_n(T)\right|^2\geq S_{N-1}\inf_{n\in\Lambda_{N-1}}\left|r_n(T)\right|^2,
\]
Now it is enough to obtain a lower bound for $\left|r_n(T)\right|$ for $n\in\Lambda_{N-1}$. Using the results of Theorems \ref{thm:NormalForm} and \ref{thm:Approximation}, we obtain
\begin{equation}\label{eq:ComputationFinalSobolev}
\begin{split}
 \left|r_n(T)\right|\geq & \left|\wt\bet_n(T)\right|-\left|\Gamma_n \left(\left\{\wt\bet_n(T) e^{i(|n|^2+v_n)T}\right\}\right)(T)-\wt\bet_n(T) e^{i(|n|^2+v_n)T}\right|\\
\geq &\left|\beta^\la_n(T) \right|- \left|\wt\bet_n(T)-\beta^\la_n(T) \right|\\
&-\left|\Gamma_n \left(\left\{\wt\bet_n(T) e^{i(|n|^2+v_n)T}\right\}\right)(T)-\wt\bet_n(T) e^{i(|n|^2+v_n)T}\right|.
\end{split}
\end{equation}
We need to obtain a lower bound for the first term of the right hand side and upper bounds for the second and third ones. Using the definition of $\beta^\la$ in \eqref{def:RescaledApproxOrbit}, the relation between $T$ and $T_0$ established in \eqref{def:Time:Rescaled} and the results in Theorem \ref{thm:ToyModelOrbit}, we have that for $n\in\Lambda_{N-1}$,
\[
\left|\beta_n^\la(T) \right|=   \la^{-1}\left| b_{N-1}(T_0)\right|^2 \geq \frac{3}{4}\la^{-1}.
\]
For the second term in the right hand side of \eqref{eq:ComputationFinalSobolev}, it is enough to use Theorem \ref{thm:Approximation} to obtain,
\[
\left|\wt\bet_n(T)-\beta^\la_n(T) \right|\leq \left(\sum_{n\in\ZZ^2}\left|\wt\bet_n(T)-\beta^\la_n(T) \right|\right) \leq \frac{\la^{-1}}{8}.
\]
For the  lower bound of the third term, we use the bound for $\Gamma-\mathrm{Id}$ given in Theorem \ref{thm:NormalForm}. Then,
\[
\begin{split}
\Big|\Gamma_n &\left(\left\{\wt\bet_n(T) e^{i(|n|^2+v_n)T}\right\}\right)(T)-\wt\bet_n(T) e^{i(|n|^2+v_n)T}\Big|\\ 
&\leq  \left\|\Gamma_n \left(\left\{\wt\bet_n(T) e^{i(|n|^2+v_n)T}\right\}\right)(T)-\wt\bet_n(T) e^{i(|n|^2+v_n)T}\right\|_{\ell^1}
\leq  \frac{\la^{-1}}{8}.
\end{split}
\]
Thus, we can conclude that
\begin{equation}\label{eq:GrowthSobolev:Final}
 \left \|r(T)\right\|^2_{H^s}\geq \frac{\la^{-2}}{4}S_{N-1}.
\end{equation}
Now we prove that
\begin{equation}\label{eq:GrowthSobolev:Initial}
 \left \|r(0)\right\|^2_{H^s}\lesssim \la^{-2}S_3.
\end{equation}
To prove it, we start by using Theorem \ref{thm:NormalForm} to see that
\[
\left\|r(0)\right\|^2_{H^s}\leq \sum_{n\in\ZZ^2}|n|^{2s} \left|\wt\bet_n(0)+\left(\Gamma_n\left(\wt\bet(0)\right)-\wt\bet_n(0)\right)\right|^2.
\]
We first bound $\left\|\wt \bet(0)\right\|^2_{H^s}$. To this end, let us recall that $\mathrm{supp }\ \wt\bet=\Lambda$ and that $\wt\bet (0)=\bet^\la (0)$ (see Theorem \ref{thm:Approximation}). Therefore,
\[
\left\|\wt \bet(0)\right\|^2_{H^s}=\sum_{n\in\Lambda}|n|^{2s} \left|\wt \bet_n(0)\right|^2=\sum_{n\in\Lambda}|n|^{2s} \left|\bet^\la_n(0)\right|^2.
\]
Then, recalling the definition of $\beta^\la$ in \eqref{def:RescaledApproxOrbit} and the results in Theorem \ref{thm:ToyModelOrbit},
\[
\begin{split}
\sum_{n\in\Lambda}|n|^{2s}\left|\beta^\la_n(0)\right|^2&\leq \la^{-2} \left(1-\de^\nu\right)^2S_3+\la^{-2}\de^{2\nu}\sum_{j\neq3}S_j\\
&\leq \la^{-2}S_3\left((1-\de^\nu)^2+\de^{2\nu}\sum_{j\neq3}\frac{S_j}{S_3}\right).
\end{split}
\]
From Theorem \ref{thm:SetLambda} we know that for $j\neq 3$,
\[
\frac{S_j}{S_3}\lesssim e^{sN}
\]
Therefore, to bound these terms we use the definition of  $\de$ from Theorem \ref{thm:ToyModelOrbit} taking $\ga=\wt \ga (s-1)$.
Since $s-1>0$ is fixed, we can choose such $\wt \ga \gg 1$.
Then, we have that
\[
 \sum_{n\in\Lambda}|n|^{2s}\left|\beta^\la_n(0)\right|^2\lesssim \la^{-2}S_3.
\]
To complete the proof of statement \eqref{eq:GrowthSobolev:Initial}, it is enough to 
recall that the support of $\Gamma-\mathrm{Id}$ is
\[
 \Lambda^3=\left\{n\in\ZZ^2: n=n_1-n_2+n_3, n_1,n_2,n_3\in\Lambda\right\}
\]
and apply Theorem \ref{thm:NormalForm}.

Using inequalities  \eqref{eq:GrowthSobolev:Final} and \eqref{eq:GrowthSobolev:Initial},
we have that
\[
\frac{\left\|r(T)\right\|^2_{H^s}}{\left\|r(0)\right\|^2_{H^s}}\gtrsim \frac{S_{N-1}}{S_3},
\]
and then, applying Theorem \ref{thm:SetLambda}, we obtain
\[
\frac{\|r(T)\|_{H^s}^2}{\|r(0)\|_{H^s}^2}\gtrsim \frac 12 2^{(s-1)(N-4)}
\ge \KK ^2.
\]
To obtain the mass estimate it is enough to use the following bound
 \[\left \|r(0)\right\|^2_{L^2}\lesssim \la^{-2}\,2^N.\]
To prove it, one can proceed as for $\|r(0)\|_{H^s}$ taking into account that $|\Lambda_j|=\sum_{n\in\Lambda_j}1=2^{N-1}$.

Finally, it only remains to estimate the diffusion time $T$. We use Theorem \ref{thm:SetLambda} to set $\KK\simeq 2^{(s-1)N/2}$ and
$c=4\kk \ga/(s-1)$, definition \eqref{def:LambdaOfN} to set
$\la=e^{\kk \ga N}\simeq \KK^{c/(2\ln2)}$.
Then, we obtain
\[
|T|\leq \KKK \ \ga\ \la^2 N^2  \leq \KKK\ \ga\ \KK^{c/\ln 2}
\dfrac{4\ln^2 \KK}{\ln^2 2\ (s-1)^2}\leq \KK^c
\]
for large $\KK$.
This completes the proof of Theorem \ref{thm:main}.
\end{proof}
\begin{proof}[Proof of Theorem \ref{thm:mainslow}]
It can be easily seen that Theorem \ref{thm:Approximation} is also valid taking as a new value for the parameter $\la$,
\[
\la=\mu\ii 2^{(N-1)/2}e^{\eta_2N^2/2}.
\]
With this choice, we are be able to control the intial Sobolev norm. Indeed, in the proof of Theorem \ref{thm:main} we have seen that 
\[
 \left \|r(0)\right\|^2_{H^s}\lesssim \la^{-2}S_3.
\]
Thus, to obtain a small initial Sobolev norm we need  $\la$ to satisfy $\la^{-2}S_3\leq \mu^2$. Using the estimates for $n\in\Lambda$ given in Theorem \ref{thm:SetLambda}, we have that
\[
 S_3\leq 2^{N-1}e^{\eta_2N^2}.
\]
Therefore, with this choice of $\la$, we have a initial Sobolev norm of size $\mu$. To prove the other estimates in Theorem \ref{thm:mainslow} it is enough to point out that now the growth of Sobolev norms is given by $\KK=\CCC/\mu$ and reproduce the proof of Theorem \ref{thm:main} with the different choice of $\la$.
\end{proof}

\section{The normal form: proof of Theorem \ref{thm:NormalForm}}\label{app:NormalForm}
To prove Theorem \ref{thm:NormalForm}, we consider as a change of variables
$\Gamma$  the time one map of a Hamiltonian vector field $X_F$, where
$F$ is the Hamiltonian
\[
 F=\frac{1}{4}\sum_{n_1,n_2,n_3,n_4\in\ZZ^2}F_{n_1n_2n_3n_4}\al_{n_1}\ol{\al_{n_2}},
\al_{n_3}\ol{\al_{n_4}}
\]
with
\begin{align*}
F_{n_1n_2n_3n_4}=&
\frac{-i}{\sum_{j=1}^4(-1)^{j+1}\left(|n_j|^2+v_{n_j}\right)}
\qquad\qquad\text{ if } (n_1,n_2,n_3,n_4)\in\II'\\
F_{n_1n_2n_3n_4}=&0 \qquad\qquad\qquad\qquad\qquad\qquad\qquad\quad\,\,\ \text{otherwise.}
\end{align*}
where $(n_1,n_2,n_3,n_4)\in\II'$ if either one of these two conditions are satisfied:
\begin{itemize}
 \item[(i)] $(n_1,n_2,n_3,n_4)\in (\ZZ^2)^4$ satisfies 
\[
n_1-n_2+n_3-n_4=0\quad \text{ and }\quad \left|\sum_{j=1}^4(-1)^{j+1}\left(|n_j|^2+v_j\right)\right|> \eta,
\] 
where $\eta$ is the constant given by Theorem \ref{thm:NormalForm} and at least one of the modes $n_j$  satisfies $n_j\in B(\kk_0)$, where $\kk_0$ is the constant defined in \eqref{def:kappa0}.
\item[(ii)] $(n_1,n_2,n_3,n_4)\in (\ZZ^2)^4$ satisfies 
\[
n_1-n_2+n_3-n_4=0\quad \text{ and }\quad |n_1|^2-|n_2|^2+|n_3|^2-|n_4|^2\neq 0
\] 
and each mode satisfies $n_j\in\ZZ^2\setminus B(\kk_0)$.
\end{itemize}

\begin{lemma}\label{lemma:WellDefinedNormalForm}
The Fourier coefficients  $F_{n_1n_2n_3n_4}$ are well defined and satisfy 
\[
 \left|F_{n_1n_2n_3n_4}\right|\leq 4.
\]
Thus, the function $F$ is well defined.
\end{lemma}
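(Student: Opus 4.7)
The proof is a direct case analysis based on the two conditions that define the index set $\II'$, using in each case a lower bound for the denominator $\sum_{j=1}^4(-1)^{j+1}(|n_j|^2+v_{n_j})$.

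First I would dispose of case (i). If $(n_1,n_2,n_3,n_4)\in\II'$ via (i), then by the very definition of that case the denominator satisfies
\[
\left|\sum_{j=1}^4(-1)^{j+1}\bigl(|n_j|^2+v_{n_j}\bigr)\right|>\eta,
\]
so $|F_{n_1n_2n_3n_4}|<1/\eta$. Since the hypothesis of Theorem~\ref{thm:NormalForm} imposes $\eta\in(1/4,1/3)$, we get $1/\eta<4$ and therefore $|F_{n_1n_2n_3n_4}|<4$.

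Next I would handle case (ii), which is the only place where a genuine small-divisor argument is needed. Here every index $n_j\in\ZZ^2\setminus B(\kk_0)$, so the definition of $\kk_0$ in \eqref{def:kappa0} gives $|v_{n_j}|\leq 1/100$ for each $j$. Moreover $|n_j|^2\in\ZZ$ since $n_j\in\ZZ^2$, so the integer $\sum_{j=1}^4(-1)^{j+1}|n_j|^2$ is nonzero by the hypothesis of case (ii), which forces
\[
\left|\sum_{j=1}^4(-1)^{j+1}|n_j|^2\right|\geq 1.
\]
Combining this with the triangle inequality and the four bounds $|v_{n_j}|\leq 1/100$ yields
\[
\left|\sum_{j=1}^4(-1)^{j+1}\bigl(|n_j|^2+v_{n_j}\bigr)\right|\geq 1-\frac{4}{100}=\frac{96}{100},
\]
hence $|F_{n_1n_2n_3n_4}|\leq 100/96<2<4$. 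Together with case (i) this establishes the uniform bound $|F_{n_1n_2n_3n_4}|\leq 4$, and in both cases the denominator is nonzero so $F_{n_1n_2n_3n_4}$ is well defined; for the remaining $(n_1,n_2,n_3,n_4)\notin\II'$ the coefficient is defined to be $0$.

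There is no real obstacle here: the only slightly non-trivial point is the elementary observation that in case (ii) the expression $\sum_{j=1}^4(-1)^{j+1}|n_j|^2$ is an integer, so the hypothesis that it does not vanish immediately promotes to a quantitative lower bound of $1$, which then dominates the small perturbation coming from the Fourier coefficients of the potential. Once the coefficients are shown to be uniformly bounded, the Hamiltonian $F$ is well defined as a (formal) quartic polynomial on $\ell^1$, and its associated vector field makes sense in a neighborhood of $0$ in $\ell^1$, which is what is needed for the subsequent normal form construction.
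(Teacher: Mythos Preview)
Your proof is correct and follows essentially the same approach as the paper: both treat cases (i) and (ii) separately, use $\eta>1/4$ to handle (i), and in (ii) exploit that the integer $\sum_j(-1)^{j+1}|n_j|^2$ is nonzero hence $\geq 1$ in absolute value, combined with $|v_{n_j}|\leq 1/100$ for $|n_j|\geq\kk_0$. The only difference is cosmetic: you obtain the sharper lower bound $96/100$ for the denominator in case (ii), while the paper simply records $\geq 1/2$.
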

\begin{proof}
One only needs to proof that the denominators involved in the definition of the coefficients $F_{n_1n_2n_3n_4}$ do not vanish and to compute a lower bound for them. If $(n_1,n_2,n_3,n_4)\in \II'$ and satisfies condition $(i)$, the denominator do not vanish since its absolute value is bigger than $\eta$. Using the definition of $\eta$ in Theorem \ref{thm:NormalForm}, we obtain the wanted bound for $F_{n_1n_2n_3n_4}$.
If $(n_1,n_2,n_3,n_4)\in \II'$ and satisfies condition $(ii)$, it is enough to point out that $|n_1|^2-|n_2|^2+|n_3|^2-|n_4|^2$ is an integer number at therefore $|n_1|^2-|n_2|^2+|n_3|^2-|n_4|^2\neq 0$ implies
$\left||n_1|^2-|n_2|^2+|n_3|^2-|n_4|^2\right|\geq 1$. Then using that $|n_i|\geq \kk_0$ for $i=1,\ldots,4$ and \eqref{def:kappa0}, one can easily see that 
\[
 \left|\sum_{j=1}^4(-1)^{j+1}\left(|n_j|^2+v_{n_j}\right)\right|\geq \frac{1}{2}.
\]
\end{proof}

If we define $\Phi_F^t$ the flow of the vector field associated to the
Hamiltonian $F$, we have that
\[
 \begin{split}
\HH\circ\Gamma=&\left.H\circ   \Phi_F^t\right|_{t=1}\\
=&\HH+\{\HH,F\}+\int_0^1(1-t)\left\{\left\{\HH,F\right\},F\right\}
\circ\Phi_F^tdt\\
=&\DDD +\GG+\{\DDD, F\}\\
&+\{\GG,F\}+\int_0^1(1-t)\left\{\left\{\HH,F\right\},F\right\}\circ\Phi_F^tdt,
 \end{split}
\]
where $\{\cdot,\cdot\}$ denotes the Poisson bracket with respect to the symplectic form $\Omega=\frac{i}{2}\sum_{n\in\ZZ^2}\al_n\wedge \ol{\al_n}$. We define
\[
 \RRR=\{\GG,F\}+\int_0^1(1-t)\left\{\left\{\HH,F\right\},F\right\}\circ\Phi_F^tdt.
\]
Then, it only remains to obtain the desired bounds for $X_\RRR$ and $\Gamma$ and
to see that
\[
 \GG+\{\DDD, F\}=\wt \GG_1+\wt \GG_2.
\]
To obtain, this last equality, it is enough to use the definition for $F$ to see
that
\[
\begin{split}
  \GG+\{\DDD,
F\}=&\frac{1}{4}\sum_{n_1-n_2+n_3=n_4}\left(1-i\sum_{j=1}^4(-1)^{j+1}(|n_j|^2+v_{n_j})F_{n_1n_2n_3n_4}\right)\al_{
n_1}\ol{\al_{n_2}}\al_{n_3}\ol{\al_{n_4}}\\
=& \wt\GG_1+\wt\GG_2.
\end{split}
\]
To obtain the  bounds for $X_\RRR$ we proceed as in \cite{GuardiaK12}. We use the fact that the space $\ell^1$ is a Banach algebra with respect to the convolution
product. Namely, if $a,b\in \ell^1$ its convolution product $a\ast b$,
which is defined by
\[
( a\ast b)_n=\sum_{n_1+n_2=n} a_{n_1}b_{n_2}
\]
satisfies
\[
 \|a\ast b\|_{\ell^1}\leq \|a\|_{\ell^1}\| b\|_{\ell^1}.
\]
We start by bounding $X_{\{\GG,F\}}$, the
vector field associated to the Hamiltonian $\{\GG,F\}$. We have to bound
\[
\left\|X_{\{\GG,F\}}\right\|_{\ell^1}=2 \sum_{n\in \ZZ^2}
\left|\pa_{\ol{\al_n}}\{\GG,F\}\right|.
\]
Then,
\[
 \begin{split}
  \left\|X_{\{\GG,F\}}\right\|_{\ell^1}\leq& 2 \sum_{n,m\in \ZZ^2}
\left|\pa_{\ol{\al_n}}\left(\pa_{\ol{\al_m}}\GG\pa_{\al_m} F\right)\right|+ 2 \sum_{n,m\in
\ZZ^2} \left|\pa_{\ol{\al_n}}\left(\pa_{\al_m}\GG\pa_{\ol{\al_m}} F\right)\right|\\
\leq &2\sum_{n,m\in \ZZ^2} \left|\pa_{\ol{\al_n}\ol{\al_m}}\GG\right|\left|\pa_{\al_m}
F\right|+ 2 \sum_{n,m\in \ZZ^2} \left|\pa_{\ol{\al_m}}\GG\right|\left|\pa_{\ol{\al_n}\al_m}
F\right|\\
&+2\sum_{n,m\in \ZZ^2} \left|\pa_{\ol{\al_n}\al_m}\GG\right|\left|\pa_{\ol{\al_m}} F\right|+
2 \sum_{n,m\in \ZZ^2} \left|\pa_{\al_m}\GG\right|\left|\pa_{\ol{\al_n}\ol{\al_m}} F\right|.
 \end{split}
\]
All the terms can be bounded analogously. As an example, we bound the first one,
\[
\begin{split}
\sum_{n,m\in \ZZ^2} \left|\pa_{\ol{\al_n}\ol{\al_m}}\GG\right|\left|\pa_{\al_m}
F\right|\leq &16\sum_{n,m\in \ZZ^2} \left|\sum_{n_1+n_2=m+n}\al_{n_1}
{\al_{n_2}}\right|\left|\sum_{n_1-n_2+n_3=m}\ol{\al_{n_1}}\al_{n_2}\ol{\al_{n_3}}
\right|\\
\leq &16\sum_{n\in \ZZ^2} \sum_{n_1+n_2=n}|\al_{n_1}||\al_{n_2}|\sum_{m\in
\ZZ^2}\sum_{n_1-n_2+n_3=m}|\al_{n_1}||\al_{n_2}||\al_{n_3}|\\
\leq &\OO\left(\|\al\|_{\ell^1}^5\right),
\end{split}
\]
where, in the first line we have taken into account that
$|F_{n_1n_2n_3n_4}|\leq 4$, and  to obtain the last line we have used that
each sum in the previous line is a convolution product. The other term in the
reminder can be bounded analogously taking into account that
\[
\begin{split}
\int_0^1(1-t)\left\{\left\{\HH,F\right\},F\right\}
\circ\Phi_F^tdt=&\int_0^1(1-t)\left\{\wt\GG-\GG,F\right\}\circ\Phi_F^tdt\\
&+\int_0^1(1-t)\left\{\left\{\GG,F\right\},F\right\}\circ\Phi_F^tdt.
\end{split}
\]
Analogously, one can obtain bounds for $\Gamma-\mathrm{Id}$ recalling that
\[
 \Gamma=\Id+\int_0^1 X_F\circ \Phi_F^t dt.
\]

\section{Construction of the set $\Lambda$: proof of Theorem \ref{thm:SetLambda}}\label{app:SmallSobolev}
We devote this section to prove Theorem \ref{thm:SetLambda}. To this end we modify the construction of the set $\Lambda$ done in \cite{CollianderKSTT10} and \cite{GuardiaK12} for \eqref{def:NLS}. For equation \eqref{def:NLS} the resonant set was defined as 
\[
\begin{split}
 \BB=\Big\{(n_1,n_2,n_3,n_4)\in \left(\ZZ^2\right)^4:\,& n_1-n_2+n_3-n_4=0, \\&|n_1|^2-|n_2|^2+|n_3|^2-|n_4|^2=0, n_1,n_3\neq n_4\Big\}.
\end{split}
\]
In Section \ref{sec:SketchProof} we have defined the conditions $1_\La$ -- $6_\La$ which involve tuples in $\wt\AAA_1$. To avoid confusions, we denote by  $1'_\La$ -- $6'_\La$ the same conditions but referred to the tuples in $\BB$ (note that $\wt\AAA_1\subset\BB)$. In \cite{GuardiaK12} it is 
 proved the following proposition, which is a quantitative version of  Proposition 2.1 of \cite{CollianderKSTT10}. It constructs a set of modes $\Lambda^{(0)}$ satisfying conditions $1'_\La$ -- $6'_\La$.

\begin{proposition}[\cite{GuardiaK12}]\label{thm:SetLambda:GK}
Let $\KK\gg 1$. Then, there exists
$N\gg 1$ and a set $\Lambda^{(0)}\subset\ZZ^2$, with
\[
 \Lambda^{(0)}=\Lambda_1^{(0)}\cup\ldots\cup\Lambda_N^{(0)},
\]
which satisfies  conditions $1_\Lambda'$ -- $6_\Lambda'$ and also
\begin{equation}\label{def:Growth:GK}
 \frac{\sum_{n\in\Lambda^{(0)}_{N-1}}|n|^{2s}}{\sum_{n\in\Lambda^{(0)}_3}|n|^{2s}}\geq
 \dfrac 12 2^{(s-1)(N-4)}\ge \KK^2.
\end{equation}
Moreover, there exists $\eta_1'>0$ and $\eta_2'>0$ such that we can ensure that each generation
$\Lambda^{(0)}_j$ has $2^{N-1}$ disjoint frequencies $n$ satisfying 
\[
 e^{\eta_1' N^2}<|n|<e^{\eta_2' N^2}.
\]
\end{proposition}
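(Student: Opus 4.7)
The plan is to build $\Lambda^{(0)}$ as the image of an abstract combinatorial $N$-generation structure under an embedding $\iota\colon\Sigma\to\ZZ^2$ that sends each nuclear family to a genuine rectangle, i.e., to a tuple in $\BB$. This is exactly the scheme of Proposition 2.1 of \cite{CollianderKSTT10}, with the refinements of \cite{GuardiaK12} needed to obtain condition $6_\Lambda'$ and the quantitative magnitude bounds. First I would fix the combinatorial model: take $\Sigma=\Sigma_1\cup\cdots\cup\Sigma_N$ with $|\Sigma_j|=2^{N-1}$, equipped with a \emph{spouse} involution on each $\Sigma_j$ ($j<N$), a \emph{sibling} involution on each $\Sigma_{j+1}$, and an explicit bijection between the $2^{N-2}$ spouse pairs of $\Sigma_j$ and the $2^{N-2}$ sibling pairs of $\Sigma_{j+1}$. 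A clean presentation is $\Sigma_j\simeq\{0,1\}^{N-1}$ with spouse and sibling declared to be flips of two different, carefully chosen coordinates, so that $2_\Lambda'$ and $3_\Lambda'$ are built in and $4_\Lambda'$ is arranged by forcing the flip coordinates to disagree from one generation to the next.

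Next I would realize $\Sigma$ geometrically, generation by generation. Fix a seed in $\QQ^2$ and angles $\theta_1,\ldots,\theta_{N-1}$ with $\cos\theta_j,\sin\theta_j\in\QQ$ (by the Pythagorean parameterization there is still an infinite rational family to choose from at each step). For each spouse pair $\{n_1,n_3\}\subset\Lambda_j^{(0)}$, I would define the children $\{n_2,n_4\}$ as the endpoints of the segment obtained by rotating the diagonal $\overline{n_1n_3}$ about its midpoint by $\theta_j$. The two diagonals then share midpoint and length, so the quadrilateral $(n_1,n_2,n_3,n_4)$ is a rectangle and lies in $\BB$, closing $1_\Lambda'$ at the family level. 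The combinatorial bijection in $\Sigma$ dictates how children of different spouse pairs in $\Lambda_j^{(0)}$ are grouped into sibling pairs of $\Lambda_{j+1}^{(0)}$. Once all $N$ generations have been built in $\QQ^2$, a common rescaling clears denominators and lands everything inside $\ZZ^2$.

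The main obstacle is the choice of seed and angles $\theta_j$ ensuring that conditions $5_\Lambda'$ (faithfulness) and $6_\Lambda'$ (no spreading to high harmonics) hold simultaneously. Any \emph{accidental} rectangle inside $\Lambda^{(0)}$ beyond the nuclear families, or any accidental $\BB$-tuple beyond the two ``expected'' ones that pair two modes of $\Lambda^{(0)}$ with a given external frequency, imposes a nontrivial polynomial equation on the finitely many parameters (seed coordinates and rational tangents $\tan(\theta_j/2)$). The bad parameter set is thus a finite union of proper algebraic subvarieties of the parameter space, and a pigeonhole argument over rational parameters produces a choice avoiding all of them. The contribution of \cite{GuardiaK12} to this step is the enumeration of the additional equations coming from $6_\Lambda'$ and the verification that their total count remains polynomial in $|\Lambda^{(0)}|=N2^{N-1}$, so that the argument still goes through.

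Finally, for the magnitude and Sobolev estimates, I would tune the angles so that the magnitudes grow by a definite factor from one generation to the next, e.g.\ $\max_{n\in\Lambda_j^{(0)}}|n|\asymp 2^{j/2}\max_{n\in\Lambda_1^{(0)}}|n|$ before the integer rescaling. That final rescaling multiplies every magnitude by the product $D$ of denominators accumulated through the $N$ rotations, which is at most $2^{O(N^2)}$; this yields both the lower bound $|n|\ge e^{\eta_1'N^2}$ (the seed being chosen away from the origin) and the upper bound $|n|\le e^{\eta_2'N^2}$. A direct computation using the geometric growth of magnitudes and the equal cardinality $2^{N-1}$ of each generation then gives
\[
\frac{\sum_{n\in\Lambda_{N-1}^{(0)}}|n|^{2s}}{\sum_{n\in\Lambda_3^{(0)}}|n|^{2s}}\geq \frac12\, 2^{(s-1)(N-4)},
\]
and taking $N$ large enough that this lower bound exceeds $\KK^2$ completes the proof.
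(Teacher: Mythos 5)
First, a point of reference: the paper does not actually prove Proposition~\ref{thm:SetLambda:GK}; it imports it from \cite{GuardiaK12} (``This proposition is proved in Appendix C of \cite{GuardiaK12}''), which in turn is a quantitative refinement of Proposition~2.1 of \cite{CollianderKSTT10}. So your proposal has to be measured against that external construction rather than against anything in this paper. In outline you have reproduced the right skeleton: an abstract $N$-generation combinatorial model with spouse and sibling involutions, a planar realization in which each nuclear family is a genuine rectangle (two diagonals sharing midpoint and length, hence a tuple of $\BB$), a genericity argument confining the configurations violating $5'_\Lambda$ and $6'_\Lambda$ to finitely many proper algebraic subvarieties of the parameter space, and a final rescaling into $\ZZ^2$. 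Your acknowledgement that the substantive work is enumerating the bad varieties and verifying that each is proper is fair for a sketch. One small slip: the sibling pairs of $\Lambda_{j+1}^{(0)}$ are automatic (the two children of a family are each other's siblings by definition), so what the combinatorial data must prescribe is the new \emph{spouse} pairing on $\Lambda_{j+1}^{(0)}$, which by $4'_\Lambda$ must differ from the sibling pairing.

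There is, however, a genuine gap in your final paragraph. You propose to tune the angles so that the magnitudes grow geometrically, $\max_{n\in\Lambda_j^{(0)}}|n|\asymp 2^{j/2}\max_{n\in\Lambda_1^{(0)}}|n|$, and to deduce \eqref{def:Growth:GK} from ``the geometric growth of magnitudes and the equal cardinality of each generation.'' This cannot work as stated. Every nuclear family satisfies $|n_1|^2+|n_3|^2=|n_2|^2+|n_4|^2$ (this is exactly the resonance condition defining $\BB$), and summing over the $2^{N-2}$ families between consecutive generations gives
\[
\sum_{n\in\Lambda_{j+1}^{(0)}}|n|^2=\sum_{n\in\Lambda_{j}^{(0)}}|n|^2=:E
\quad\text{for all }j,
\]
so the quantity you would need to grow is in fact conserved from generation to generation; a uniform increase of all magnitudes by $\sqrt2$ per generation is geometrically impossible, and growth of the maximum alone says nothing about $\sum|n|^{2s}$. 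The correct mechanism --- and the reason the exponent in \eqref{def:Growth:GK} is $s-1$ rather than $s$ --- is concentration under this constraint: generation $3$ is arranged to be essentially flat (all $2^{N-1}$ magnitudes comparable, so $\sum_{\Lambda_3^{(0)}}|n|^{2s}$ is close to the power-mean minimum $2^{N-1}(E/2^{N-1})^s$), while in each subsequent generation each family sends almost all of its $|n|^2$-mass to one of its two children, so that by generation $N-1$ roughly $2^{N-1}/2^{N-4}$ modes carry essentially all of $E$. The power-mean computation then yields the ratio $2^{s(N-4)}/2^{N-4}=2^{(s-1)(N-4)}$, which is \eqref{def:Growth:GK}. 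Your sketch needs this redistribution argument in place of uniform magnitude growth; relatedly, the bounds $e^{\eta_1'N^2}<|n|<e^{\eta_2'N^2}$ come from compounding, over $N$ generations, per-generation factors that must themselves be taken exponentially large in $N$ for the separation and genericity properties to hold, not merely from clearing denominators.
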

This proposition is proved in Appendix C of \cite{GuardiaK12}. Moreover, following the proof, one can easily see that one can impose the following additional conditions.
\begin{itemize}
 \item[$9_\La'$] Consider any two modes $n_1,n_2\in\Lambda^{(0)}$. Then, they satisfy
\begin{equation}\label{cond:C}
|n_1+n_2|\neq |n_1-n_2|. 
\end{equation}
\item[$10_\La'$] Consider any two modes $n_1,n_2\in\Lambda^{(0)}$. Then, the lines passing through one of these points and perpendicular to the segment between them do not contain the origin. Equivalently, 
\begin{equation}\label{cond:C2}
(n_2-n_1)\cdot n_1\neq 0.
\end{equation}
\end{itemize}
These conditions will be used in the proof of Theorem \ref{thm:SetLambda}.

We use Proposition \ref{thm:SetLambda:GK} to prove Theorem \ref{thm:SetLambda}, which constructs a set $\Lambda$ satisfying properties $1_\La$ -- $8_\La$, which refer to  the resonant tuples  of the equation \eqref{def:NLSV}, that is to $\wt\AAA=\wt\AAA_0\cup\wt\AAA_1$. Note that if we take the set $\La^{(0)}$ given by Proposition \ref{thm:SetLambda:GK} and we define a new set
\[
 \Lambda'=\left\{n'\in\ZZ^2:n'=Cn, n\in\Lambda^{(0)}\right\},
\]
for some $0<C<e^{\nu N^2}$ with $\nu>0$, we obtain a new set which also satisfies all the conditions stated in Proposition \ref{thm:SetLambda:GK} for different constants $\eta_1'$ and $\eta_2'$. We will show that making the appropiate rescaling this new set $\Lambda'$ also satisfies conditions $1_\Lambda$ -- $8_\Lambda$.

First, we point out that taking $N$ large enough, the points in the set $\Lambda^{(0)}$ satisfy 
\begin{equation}\label{def:LambdaRadius1}
|n|\geq\kk_0
\end{equation}
where $\kk_0$ is the constant defined in \eqref{def:kappa0} (recall that $\kk_0$ is independent of $N$). Then, since the resonant tuples of $\wt\AAA_1$ are contained in the resonant tuples of $\BB$, one has that $\Lambda^{(0)}$ satisfies conditions $1_\La$ -- $6_\La$.

Now, we show that a suitable blow up of $\La^{(0)}$ satisfies  condition  $7_\La$ for the resonant set $\wt\AAA=\wt\AAA_0\cup\wt\AAA_1$. Condition $8_\La$ will need an additional blow up that will be done later on. To check condition $7_\La$, we consider three modes  $n_1,n_2,n_3\in\La^{(0)}$.
By Proposition \ref{thm:SetLambda:GK}, these three modes cannot make a rectangle with a mode out of $\Lambda^{(0)}$, in particular, they cannot make a rectangle with $n_4=0$. This implies
\[
 |n_1|^2-|n_2|^2+|n_3|^2\neq 0
\]
which is equivalent to 
\[
\left| |n_1|^2-|n_2|^2+|n_3|^2|\right|\geq 1
\]
since $|n_j|^2$ are natural numbers. 
We consider  the blow up of $\Lambda^{(0)}$,
\[
 \Lambda^{(1)}=\{n'\in\ZZ^2:n'=C_1n, n\in\Lambda^{(0)}\},
\]
with
\begin{equation}\label{def:C1}
 C_1=\left\lceil\kk_0^2+4\|V\|_{H^{s_0}}+1\right\rceil,
\end{equation}
where  $\lceil x\rceil$ denotes the upper integer part of $x$ and $\kk_0$ is the constant introduced in \eqref{def:LambdaRadius1}. Recall that $V\in H^{s_0}$ by hypothesis. We show that $\Lambda^{(1)}$ satisfies the condition $7_\La$. Note that the modes $n\in\Lambda^{(1)}$ satisfy
\begin{equation}\label{def:BoundLambda1}
 |n|\leq C_1 {e^{\eta_2'N^2}}.
\end{equation}
Take a mode $n_4'$ satisfying $|n_4'|< \kk_0$ and we show that they cannot form a resonant tuple in $\wt\AAA_0$.
 Thanks to the blow up, we have that 
\[
\left| |n_1'|^2-|n_2'|^2+|n_3'|^2|\right|\geq \kk_0^2+4\|V\|_{H^{s_0}}+1
\]
We use this lower bound to prove that the tuple cannot be resonant. We need to check that $|\rr_{n_1'n_2'n_3'n_4'}|\geq \eta$ (see \eqref{def:SmallDivisor} for the definition of $\rr_{n_1'n_2'n_3'n_4'}$ and Theorem \ref{thm:NormalForm} for the definition of $\eta$). Using $|n_4'|< \kk_0$ and that the Fourier coefficients $v_n$ satisfy $|v_n|\leq \|V\|_{H^{s_0}}$, we have that
\[
\begin{split}
\left|\rr_{n_1'n_2'n_3'n_4'}\right|&\geq \left|\left| |n_1'|^2-|n_2'|^2+|n_3'|^2\right|-|n_4'|^2-\sum_{j=1}^4|v_{n_j'}|\right|\\
&\geq\left(\kk_0^2+4\|V\|_{H^{s_0}}+1\right)-\kk_0^2-4\|V\|_{H^{s_0}}= 1\geq \eta,
\end{split}
\]
and therefore the tuple cannot be resonant. This completes the proof of condition $7_\La$ for $\La^{(1)}$.


Now it only remains to make an additional blow up so that the new set also satisfies $8_\La$. 
We define
\[
 \Lambda^{(2)}=\{n'\in\ZZ^2:n'=C_2 n, n\in\Lambda^{(1)}\},
\]
with 
\begin{equation}\label{def:C2}
C_2= \left\lceil4(\kk_0+2\eta+8\|V\|_{H^{s_0}}+1)C_1e^{\eta_2'N^2}\right\rceil,
\end{equation}
where $C_1$ has been defined in \eqref{def:C1} and $\eta_2'$ is the constant introduced in Proposition \ref{thm:SetLambda:GK}. One can easily see that after the blow up, the set $\La^{(2)}$ still satisfies conditions $1_\La$ -- $7_\La$. Moreover, taking into account \eqref{def:BoundLambda1} and Proposition \ref{thm:SetLambda:GK}, we have that the points in $\Lambda^{(2)}$ satisfy
\begin{equation}\label{def:BoundLambda2}
 |n|\leq C_2C_1 {e^{\eta_2'N^2}}.
\end{equation}
We have to show that there cannot exist tuples in $\wt\AAA_0$ involving two modes $n_1',n_2'\in\Lambda$ and $m,\ell\not\in\Lambda$.
To belong to $\wt\AAA_0$, the tuples must have a mode in $B(\kk_0)$ and  must satisfy either
\[
|\rr_{m\ell n_1'n_2'}|\leq \eta\,\,\,\text{ or } |\rr_{mn_1'n_2'\ell}|\leq \eta
\]
(the other cases are symmetric). We show that this is not possible. We start with the first case. Namely, we assume that 
\[
 |\rr_{m\ell n_1'n_2'}|\leq \eta
\]
and we show that it implies $|m|>\kk_0$ and $|\ell|>\kk_0$.

Using that $m+\ell=n_1'+n_2'$ it is easy to see that the condition $|\rr_{m\ell n_1'n_2'}|\leq \eta$ can be written as
\[
 \left|\left|m-\frac{n_1'+n_2'}{2}\right|-\frac{|n_1'-n_2'|}{2}+v_m-v_\ell+v_{n_1'}-v_{n_2'}+\right|\leq \eta
\]
which, using that $|v_j|\leq \|V\|_{H^{s_0}}$, implies
\[
 \left|m-\frac{n_1'+n_2'}{2}\right|\leq \frac{|n_1'-n_2'|}{2}+\eta+4\|V\|_{H^{s_0}}.
\]
This condition is satisfied provided  $m$ is inside the circumference of radius $|n_1'-n_2'|/2+\eta+4\|V\|_{H^{s_0}}$ and center $(n_1'+n_2')/2$. We show now that  belonging to this circumference implies $|m|>\kk_0$. The point of the circumference closest to the origin is either one of these two points
\[
 z_\pm=\frac{n_1'+n_2'}{2}\left(1\pm \frac{|n_1'-n_2'|+2\eta+8\|V\|_{H^{s_0}}}{|n_1'+n_2'|}\right),
\]
which, satisfy
\[
\begin{split}
 |z_\pm|\geq&\frac{1}{2}\left||n_1'+n_2'|-\left|n_1'-n_2'\right|-2\eta-8\|V\|_{H^{s_0}}\right|\\
\geq&\frac{1}{2}\left|\frac{|n_1'+n_2'|^2-\left|n_1'-n_2'\right|^2}{|n_1'+n_2'|+\left|n_1'-n_2'\right|}-2\eta-8\|V\|_{H^{s_0}}\right|.
\end{split}
\]
Now recall that $n_1'=C_2 n_1$ and $n_2'=C_2 n_2$ where $C_2$ has been defined in \eqref{def:C2} and $n_1,n_2\in\Lambda^{(1)}$. Since $\Lambda^{(0)}$, and therefore also $\Lambda^{(1)}$, satisfy condition $9_\Lambda'$, we know that $|n_1+n_2|^2-\left|n_1-n_2\right|^2\neq 0$. Since it is an integer number, we can deduce that 
\[
 \left||n_1+n_2|^2-\left|n_1-n_2\right|^2\right|\geq 1,
\]
which implies
\[
 \left||n_1'+n_2'|^2-\left|n_1'-n_2'\right|^2\right|\geq C_2^2.
\]
On the other hand, using \eqref{def:BoundLambda2}, we have that 
\[
 |n_1'+n_2'|+\left|n_1'-n_2'\right|\leq 4C_2C_1 {e^{\eta_2'N^2}}.
\]
Therefore, the points $z_\pm$ satisfy
\[
  |z_\pm|\geq \frac{C_2}{4C_1 {e^{\eta_2'N^2}}}-2\eta-8\|V\|_{H^{s_0}}.
\]
Using the definition of $C_2$ in \eqref{def:C2} we have 
\[
  |z_\pm|\geq \kk_0+1.
\]
Since one of these points $z_\pm$ is the closest point of the circumference to the origin, we can deduce that  $|m|>\kk_0$. Note that the mode $\ell$ also must be inside the same circumference and therefore it also satisfies $|\ell|>\kk_0$.

Now we deal with the second case, that is 
\[
  |\rr_{mn_1'n_2'\ell}|\leq \eta.
\]
We assume that $m$ satisfies $|m|\leq \kk_0$ and we reach a contradiction. Using $m+n_2'=n_1+\ell$, we can write $\rr_{mn_1'n_2'\ell}$ as 
\[
\rr_{mn_1'n_2'\ell}=-2 (n_1'-n_2')\cdot n_1'+2 (n_1'-n_2')\cdot m +v_m-v_{n_1'}+v_{n_2'}-v_\ell.
\]
Therefore, 
\[
\left|\rr_{mn_1'n_2'\ell}\right|\geq2\left| (n_1'-n_2')\cdot n_1'\right|-2\left|(n_1'-n_2')\cdot m\right| -4\|V\|_{H^{s_0}}.
\]
For the first term, we use the fact that since $n_1',n_2'\in\Lambda^{(2)}$, they are of the form $n_j'=C_2n_j$ where $C_2$ is the constant given in \eqref{def:C2}. Using that $\La^{(0)}$, and also $\La^{(1)}$, satisfy condition $10_\La'$, we have that $\left| (n_1-n_2)\cdot n_1\right|\neq 0$. Since it is a natural number, it must satisfy $\left| (n_1-n_2)\cdot n_1\right|\geq 1$, which implies
\[
 \left| (n_1'-n_2')\cdot n_1'\right|\geq C_2^2.
\]
On the other hand, using \eqref{def:BoundLambda2} and that we have assumed that $|m|\leq \kk_0$, we have that 
\[
\left|(n_1'-n_2')\cdot m\right| \leq 2\kk_0C_2C_1 {e^{\eta_2'N^2}}.
\]
Therefore, 
\[
\left|\rr_{mn_1'n_2'\ell}\right|\geq 2 C_2^2-4\kk_0C_2C_1 {e^{\eta_2'N^2}}-\|V\|_{H^{s_0}}.
\]
Using the definition of $C_2$ in \eqref{def:C2}, one can easily see that
\[
 \left|\rr_{mn_1'n_2'\ell}\right|\geq1,
\]
which is a contradiction with what we had assumed. This implies that the set $\Lambda^{(2)}$ satisfies condition $8_\La$. Now, since we have considered blow ups by the constants $C_1$ and $C_2$, we just need to take any $\eta_1<\eta_1+\eta_2'$ and $\eta_2>2\eta_2'$ and increase $N$ if necessary, to obtain that for any $n\in\Lambda^{(2)}$,
\[
 e^{\eta_1N^2}\leq |n|\leq e^{\eta_2N^2}.
\]
This completes the proof of Theorem \ref{thm:SetLambda}.

\section{Proof of Approximation Theorem \ref{thm:Approximation}}\label{app:Approx}
We devote this section to proof the Approximation Theorem \ref{thm:Approximation}, which follows the scheme of the proof of Theorem 4 in \cite{GuardiaK12}. Nevertheless, note that now the remainder $\wt\RRR$ in \eqref{eq:VFFinal} has more terms than the corresponding one in \cite{GuardiaK12} due to the convolution potential. Throughout this section $C$ denotes any positive constant independent of $N$ and $\la$. 

We write equation \eqref{eq:VFFinal} as
\begin{equation}\label{eq:AfterNFInRotating}
- i \dot \bet_n = \EE_n(\bet)+ \wt\RRR_n(\bet),
\end{equation}
where $\EE:\ell^1\rightarrow \ell^1$ is the function defined as
\begin{equation}\label{def:Approx:CubicTerm}
 \EE_n(\bet)=-|\bet_n|^2\bet_n+  \sum_{(n_1,n_2,n_3)\in \AAA_0 (n)} \bet_{n_1} \overline { \bet_{n_2}}\bet_{n_3}e^{i\rr_{n_1n_2n_3n}t}+ \sum_{(n_1,n_2,n_3)\in \AAA_1 (n)} \bet_{n_1} \overline { \bet_{n_2}}\bet_{n_3}.
\end{equation}
Recall that we want to study the closeness of the orbit $\beta^\la(t)$ obtained in \eqref{def:RescaledApproxOrbit}, which is a solution of $-i\dot \bet^\la=\EE(\bet^\la)$, with the orbit $\wt\beta(t)$ of  equation \eqref{eq:AfterNFInRotating} which satisfies $\wt \bet(0)=\bet^\la(0)$. To this end,  we define the function $\xi$ as
\begin{equation}\label{def:Approx:Error}
 \xi=\wt\beta-\bet^\la,
\end{equation}
which satisfies $\xi(0)=0$. We apply refined Gronwall-like estimates to bound the $\ell^1$ norm of $\xi(t)$.

From equations
\eqref{eq:AfterNFInRotating} and \eqref{eq:VFFinal:Truncated} and recalling the definition of $\wt\RRR$ in \eqref{def:FinalRemainder}, one can obtain the  equation for $\xi$, which reads
\begin{equation}\label{eq:Approx:InfiniteODE}
\dot \xi =\ZZZ^0(t)+\ZZZ^1(t)\xi+\ZZZ^2(\xi,t)
\end{equation}
where
\begin{align}
 \ZZZ^0(t)=&\RRR'\left(\beta^\la\right)+\JJ\left(\beta^\la\right)\label{def:Approx:FirstIteration}\\
\ZZZ^1(t)=&D\EE\left(\beta^\la\right)+D\JJ\left(\beta^\la\right)\label{def:Approx:Linear}\\
\ZZZ^2(\xi,t)=&\EE\left(\beta^\la+\xi\right)-\EE\left(\beta^\la\right)-
D\EE\left(\beta^\la\right)\xi+
\RRR'\left(\beta^\la+\xi\right)-\RRR'\left(\beta^\la\right)\label{def:Approx:Higher}\\
&+\JJ\left(\beta^\la+\xi\right)-\JJ\left(\beta^\la\right)-
D\JJ\left(\beta^\la\right)\xi\notag
\end{align}
Applying the $\ell^1$ norm to this equation, we obtain
\begin{equation}\label{eq:Approx:Difference}
 \frac{d}{dt}\|\xi\|_{\ell^1}\leq \left\|\ZZZ^0(t)\right\|_{\ell^1}+\left\| \ZZZ^1(t)\xi\right\|_{\ell^1}+\left\| \ZZZ^2(\xi,t)\right\|_{\ell^1}.
\end{equation}
The next three lemmas give estimates for each term in the right hand side of this equation. 

\begin{lemma}\label{lemma:Approx:BoundFirstIteration}
The function $\ZZZ^0$
 defined in \eqref{def:Approx:FirstIteration}
 satisfies
$ \left\|\ZZZ^0\right\|_{\ell^1}\leq C\la^{-5} 2^{5N}$.
\end{lemma}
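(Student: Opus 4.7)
The plan is to split $\ZZZ^0(t) = \RRR'(\beta^\la(t)) + \JJ(\beta^\la(t))$ and bound each summand in $\ell^1$ separately, treating the uniform-in-$t$ bound on $[0,T]$ in both cases.

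For the first summand, I would invoke Theorem \ref{thm:NormalForm}, which gives the pointwise quintic estimate $\|X_\RRR(\alpha)\|_{\ell^1} \leq C\|\alpha\|_{\ell^1}^5$. Since the passage to rotating coordinates $\alpha_n = e^{i(|n|^2+v_n)t}\beta_n$ is a pointwise unitary map and hence an $\ell^1$-isometry, the same bound transfers to $\RRR'$: $\|\RRR'(\beta)\|_{\ell^1} \leq C\|\beta\|_{\ell^1}^5$. Because $\beta^\la$ is supported on $\Lambda$ with $|\beta^\la_n|\le \la^{-1}$ (using $|b_j|\le 1$ from Theorem \ref{thm:ToyModelOrbit}) and $|\Lambda| = N2^{N-1}$, one gets $\|\beta^\la\|_{\ell^1} \leq C N 2^N\la^{-1}$, so that $\|\RRR'(\beta^\la)\|_{\ell^1} \leq C\la^{-5} 2^{5N}$ (absorbing the polynomial factor in $N$ into the constant, as $N^5 \lesssim 2^N$ for $N$ large).

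The more delicate piece is the $\JJ$ estimate. Since $\beta^\la$ is supported in $\Lambda$, only tuples $(n_1,n_2,n_3)\in\AAA_1(n)$ with $n_1,n_2,n_3\in\Lambda$ contribute, and by the closure property $1_\Lambda$ the fourth vertex $n$ must also lie in $\Lambda$. For such tuples (resonant high-mode rectangles) the integer part of the divisor vanishes, $|n_1|^2-|n_2|^2+|n_3|^2-|n|^2=0$, so the small divisor collapses to $\rr_{n_1n_2n_3n} = v_{n_1}-v_{n_2}+v_{n_3}-v_{n}$. Now $V\in H^{s_0}(\TT^2)$ forces the pointwise Fourier decay $|v_m|\leq \|V\|_{H^{s_0}}\langle m\rangle^{-s_0}$, and the lower bound $|m|\geq e^{\eta_1 N^2}$ for every $m\in\Lambda$ from Theorem \ref{thm:SetLambda} yields $|\rr_{n_1n_2n_3n}| \leq 4\|V\|_{H^{s_0}} e^{-s_0\eta_1 N^2}$. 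Applying $|e^{i\rr t}-1|\leq |\rr|\,t$ together with the time bound $t\le T\leq C\la^2 N^2$ from \eqref{def:Time:Rescaled}, I obtain the factorwise bound $|e^{i\rr t}-1|\leq C\la^2 N^2 e^{-s_0\eta_1 N^2}$. Counting tuples via the faithfulness property $5_\Lambda$ (each $n\in\Lambda$ sits in $O(1)$ nuclear families and hence $O(1)$ resonant rectangles inside $\Lambda^4$) and using $|\beta^\la_{n_j}|\leq\la^{-1}$, I end up with
\[
\|\JJ(\beta^\la)\|_{\ell^1}\ \leq\ C N^3\, 2^N\,\la^{-1}\, e^{-s_0\eta_1 N^2},
\]
which is comfortably dominated by $C\la^{-5} 2^{5N}$ because the super-exponential factor $e^{-s_0\eta_1 N^2}$ crushes any polynomial-in-$\la$ (recall $\la=e^{\kk\ga N}$) and exponential-in-$N$ loss for $N$ large.

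Summing the two pieces gives $\|\ZZZ^0\|_{\ell^1}\leq C\la^{-5}2^{5N}$. The main obstacle is the $\JJ$ bound: one must simultaneously exploit the resonant structure of $\AAA_1$ (which cancels the dangerous $|n_j|^2$ part of $\rr$), the quantitative size $|n|\ge e^{\eta_1 N^2}$ of elements of $\Lambda$ provided by Theorem \ref{thm:SetLambda}, and the $H^{s_0}$-regularity of $V$; the $\RRR'$ bound, by contrast, is a direct application of the quintic remainder estimate of Theorem \ref{thm:NormalForm}.
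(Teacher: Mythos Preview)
Your proof is correct and follows essentially the same strategy as the paper. For $\RRR'$ the two arguments coincide: apply the quintic remainder estimate of Theorem~\ref{thm:NormalForm} (transferred through the $\ell^1$-isometric rotation \eqref{def:rotatingcoord}) together with $\|\beta^\la\|_{\ell^1}\le C\la^{-1}N2^N$.

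For $\JJ$ there is a minor but genuine difference worth noting. The paper bounds $\JJ$ as a convolution, writing $\|\JJ(\beta^\la)\|_{\ell^1}\le Ce^{-s_0\eta_1N^2}\|\beta^\la\|_{\ell^1}^3$ and then absorbing everything into $C\la^{-5}2^{5N}$ via the super-exponential decay (the time factor coming from $|e^{i\rr t}-1|\le|\rr|t$ is left implicit there, which is harmless since $e^{-s_0\eta_1N^2}$ defeats any power of $\la=e^{\kk\ga N}$ and of $T$). You instead invoke the closure and faithfulness properties $1_\Lambda,5_\Lambda$ to count the $O(1)$ resonant rectangles per mode in $\Lambda$ and keep the factor $|e^{i\rr t}-1|\le|\rr|T\le C\la^2N^2e^{-s_0\eta_1N^2}$ explicit. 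Your route is slightly sharper in the tuple count and more transparent about the time dependence; the paper's convolution argument is more streamlined and does not require the combinatorics of $\Lambda$. Both reach the same bound for the same reason: on $\AAA_1$ the integer part of $\rr$ vanishes, and $V\in H^{s_0}$ together with $|n|\ge e^{\eta_1N^2}$ forces $|\rr|$ to be super-exponentially small.
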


\begin{proof}
The bound of $\RRR'(\beta^\la)$ can be done as  in the proof of Lemma B.1 of \cite{GuardiaK12}. We bound now $\JJ(\beta^\la)$, which has been defined in \eqref{def:RemainderRotating}. The first step is to obtain upper bound for $\rr_{n_1n_2n_3n}$. Since the terms in $\JJ_n$ satisfy that $(n_1,n_2,n_3)\in\AAA_1(n)$ (see \eqref{eq:InfiniteODEAfterNormalForm}), we have that $|n_1|^2-|n_2|^2+|n_3|^2-|n|^2=0$. Then, recalling the definition of $\rr_{n_1n_2n_3n}$ in \eqref{def:SmallDivisor}, we have that 
\[
 \rr_{n_1n_2n_3n}=v_{n_1}-v_{n_2}+v_{n_3}-v_{n}.
\]
To bound it, we use the fact that $V\in H^{s_0}(\TT^2)$. Then,  its Fourier coefficients satisfy $\left|v_n\right|\leq C|n|^{-s_0}$. Since  by  Theorem \ref{thm:SetLambda} the modes in $\Lambda$ satisfy $|n|\geq e^{\eta_1 N^2}$, one has that 
\[
 \left|\rr_{n_1n_2n_3n}\right|\leq Ce^{-s_0\eta_1 N^2}.
\]
Using this estimate and that $\JJ$ is a convolution term, we have that
\[
 \left\|\JJ\left(\beta^\la\right)\right\|_{\ell^1}\leq Ce^{-s_0\eta_1 N^2} \left\|\beta^\la\right\|_{\ell^1}^3.
\]
To bound $\left\|\beta^\la\right\|_{\ell^1}$, it is enough to recall   that $\mathrm{supp}\{\beta^\la\}\subset\Lambda$ and the definition of $\beta^\la$ in \eqref{def:RescaledApproxOrbit} and Theorem \ref{thm:ToyModelOrbit} to show that
\[
 \left\|\beta^\la(t)\right\|_{\ell^1}\leq \sum_{n\in\Lambda}\left|\beta_n^\la(t)\right|\leq \la^{-1}2^N\sum_{j=1}^N \left|b_j\left(\la^{-2}t\right)\right|\leq C\la^{-1}2^NN.
\]
Thus, using the choice of $\la$ done in Theorem \ref{thm:Approximation},  we can conclude that
\[
 \left\|\JJ\right\|_{\ell^1}\leq C\la^{-3}e^{-s_0\eta_1 N^2}2^{3N}N^3\leq  C\la^{-5}2^{5N}.
\]
\end{proof}

\begin{lemma}\label{lemma:Approx:BoundLinear}
The linear operator $\ZZZ^1(t)$
satisfies
$\left\| \ZZZ^1(t)\xi\right\|_{\ell^1}\leq \sum_{n\in\ZZ^2}f_n(t)|\xi_n|,$
where $f_n(t)$ are positive functions satisfying
\begin{equation}\label{eq:Approx:BoundLinearFs}
 \int_0^Tf_n(t)dt\leq C\ga N,
\end{equation}
where $T$ is the time given in \eqref{def:Time:Rescaled} and $\ga$ is the constant given in Theorem \ref{thm:ToyModelOrbit}.
\end{lemma}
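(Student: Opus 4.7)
The plan is to expand $\ZZZ^1(t)\xi = D\EE(\beta^\la)\xi + D\JJ(\beta^\la)\xi$ using the explicit formulas \eqref{def:Approx:CubicTerm} and \eqref{def:RemainderRotating}. Each component $(\ZZZ^1\xi)_m$ is a sum of terms of the form $c_{m,n_1,n_2,n_3}\,\bet^\la_{n_1}\ol{\bet^\la_{n_2}}\,\xi_{n_3}$ (and conjugate analogs) where the tuple $(n_1,n_2,n_3,m)$ ranges over $\wt\AAA_0\cup\wt\AAA_1$ (plus the diagonal $|\bet_n|^2\bet_n$ contribution). Taking $\ell^1$ norm and swapping the order of summation, I would write $\|\ZZZ^1\xi\|_{\ell^1}\leq \sum_n f_n(t)|\xi_n|$ where $f_n(t)$ collects, for each fixed $n$ that plays the role of the slot with $\xi$, the sum over the other three indices (two occupied by $\bet^\la$, one summed over $m$) of $|\bet^\la_{n_1}||\bet^\la_{n_2}|$ times any phase factor.

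Since $\mathrm{supp}\,\bet^\la\subset\Lambda$, only tuples with $n_1,n_2\in\Lambda$ contribute, so one must count, for each $n$, the number of resonant tuples meeting $\Lambda$ in two modes. I would split the analysis as follows. For $n\in\Lambda$, properties $2_\La$ and $3_\La$ imply that the relevant tuples are a bounded number of nuclear families and trivial permutations, so if $n\in\Lambda_j$ the pairs $(n_1,n_2)$ contributing lie in $\Lambda_{j\pm1}\times\Lambda_{j\pm1}$ or $\Lambda_j\times\Lambda_j$, giving a pointwise bound $f_n(t)\leq C\la^{-2}\bigl(|b_j(\la^{-2}t)|^2+|b_{j-1}(\la^{-2}t)|^2+|b_{j+1}(\la^{-2}t)|^2\bigr)$. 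For $n\notin\Lambda$, properties $6_\La$ and $8_\La$ guarantee at most a bounded number of tuples in $\wt\AAA_0\cup\wt\AAA_1$ through $n$ with two $\Lambda$-modes; fixing the two generations $j_1,j_2$ so picked, one gets $f_n(t)\leq C\la^{-2}|b_{j_1}(\la^{-2}t)|\,|b_{j_2}(\la^{-2}t)|$.

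The integration step uses the rescaling $s=\la^{-2}t$ together with the traveling-wave structure of the toy-model orbit from Theorem \ref{thm:ToyModelOrbit}, namely the estimate $\int_0^{T_0}|b_j(s)|^2\,ds\leq C\ga N$ established in \cite{GuardiaK12} (each generation is only $O(\ga N)$-excited during its transit, out of a total time $O(\ga N^2)$). Combined with Cauchy--Schwarz $\int|b_{j_1}b_{j_2}|\leq(\int|b_{j_1}|^2)^{1/2}(\int|b_{j_2}|^2)^{1/2}$, this yields $\int_0^T f_n(t)\,dt\leq C\ga N$ in both cases, which is \eqref{eq:Approx:BoundLinearFs}.

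For the $D\JJ(\bet^\la)$ part one proceeds identically, except each contribution carries the extra factor $|e^{i\rr_{n_1n_2n_3n}t}-1|\leq|\rr_{n_1n_2n_3n}|\,t$. Since the tuples in $\AAA_1$ are supported on $\Lambda\subset\ZZ^2\setminus B(\kk_0)$ with $|n|\geq e^{\eta_1 N^2}$ and $V\in H^{s_0}$, the computation used in Lemma \ref{lemma:Approx:BoundFirstIteration} gives $|\rr_{n_1n_2n_3n}|\leq Ce^{-s_0\eta_1 N^2}$, so the $D\JJ$ contribution to $\int_0^T f_n\,dt$ is bounded by $Ce^{-s_0\eta_1 N^2}T^2\cdot\la^{-2}$, which with the choice $\la=e^{\kk\ga N}$ from \eqref{def:LambdaOfN} is doubly exponentially smaller than $\ga N$ and hence absorbed. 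The main obstacle in writing this out rigorously is the combinatorial bookkeeping: making sure that each fixed index $n\notin\Lambda$ really does appear in only finitely many resonant tuples with two $\Lambda$-modes once all eight roles (conjugated/unconjugated slots of $\EE$ and $\JJ$) are accounted for, and that the resulting pairs $(j_1,j_2)$ can be chosen so that the Cauchy--Schwarz estimate above applies uniformly in $n$.
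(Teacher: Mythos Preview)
Your proposal is correct and follows precisely the approach the paper intends: the paper's own proof simply defers to Lemma~B.2 of \cite{GuardiaK12}, and your sketch reconstructs that argument (support of $\beta^\la$ in $\Lambda$, combinatorial control via the properties of $\Lambda$, and the key time estimate $\int_0^{T_0}|b_j|^2\,ds\le C\gamma N$), while correctly adding the treatment of the new $D\JJ$ term using the smallness of $\rr_{n_1n_2n_3n}$ from Lemma~\ref{lemma:Approx:BoundFirstIteration}. Two small refinements: for $n\in\Lambda$ you should also invoke $5_\Lambda$ (faithfulness) to conclude that the only rectangles are nuclear families, and $7_\Lambda$ to exclude $\wt\AAA_0$ tuples with three modes in $\Lambda$; with those added, the ``combinatorial bookkeeping'' concern you raise is fully resolved by the design of Properties $1_\Lambda$--$8_\Lambda$.
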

The proof of this lemma follows the same lines as the proof of Lemma B.2 in \cite{GuardiaK12}.

To obtain estimates for  $\ZZZ^2(\xi,t)$ defined in  \eqref{def:Approx:Higher},
we apply bootstrap. 
Assume that for $0<t<T^*$ we have
\begin{equation}\label{cond:Bootstrap}
 \|\xi(t)\|_{\ell^1}\leq C\la^{-3/2}2^{-N}.
\end{equation}
\emph{A posteriori} we will show that the time \eqref{def:Time:Rescaled}
satisfies $0<T<T^*$ and therefore
the bootstrap assumption holds.
\begin{lemma}\label{lemma:Approx:BoundHigher}
Assume that condition \eqref{cond:Bootstrap} is satisfied. Then
the  operator $\ZZZ^2(\xi,t)$
satisfies
\[
 \left\| \ZZZ^2(\xi,t)\right\|_{\ell^1}\leq  C\la^{-5/2}\|\xi\|_{\ell^1}.
\]
\end{lemma}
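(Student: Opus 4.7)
The plan is to split $\ZZZ^2$ into the three natural pieces coming from its definition in \eqref{def:Approx:Higher}: (i) the Taylor remainder of order $\geq 2$ in $\xi$ for the cubic map $\EE$; (ii) the analogous Taylor remainder for the small-divisor cubic $\JJ$; and (iii) the plain difference $\RRR'(\bet^\la+\xi)-\RRR'(\bet^\la)$ coming from the quintic and higher order remainder. For each piece the estimate will rely on the Banach algebra property of $\ell^1$ with respect to convolution, together with the a priori bound $\|\bet^\la\|_{\ell^1}\leq C\la^{-1}2^N\sqrt N$ (which follows from $\mathrm{supp}\,\bet^\la\subset\Lambda$, the definition \eqref{def:RescaledApproxOrbit}, Cauchy--Schwarz, and $|b_j(\la^{-2}t)|\leq 1$ from Theorem \ref{thm:ToyModelOrbit}) and the bootstrap assumption \eqref{cond:Bootstrap}. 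The common strategy is to expand each remainder, factor out a single $\|\xi\|_{\ell^1}$ to match the linear dependence required in the conclusion, and estimate what remains by the smallness of $\bet^\la$ and of $\xi$.

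For the cubic piece coming from $\EE$, since $\EE$ is, up to the $t$-dependent phases $e^{i\rr t}$ of modulus one and coefficients bounded by Lemma \ref{lemma:WellDefinedNormalForm}, a polynomial of degree three in $(\bet,\ol\bet)$, the Taylor remainder is a finite sum of monomials that are either quadratic or cubic in $\xi$. Using convolution estimates in $\ell^1$ as in the proof of Lemma \ref{lemma:Approx:BoundFirstIteration}, this piece is bounded by $C\bigl(\|\bet^\la\|_{\ell^1}\|\xi\|_{\ell^1}+\|\xi\|_{\ell^1}^2\bigr)\|\xi\|_{\ell^1}$, and inserting $\|\bet^\la\|_{\ell^1}\leq C\la^{-1}2^N\sqrt N$ and one application of \eqref{cond:Bootstrap} inside the parentheses gives the dominant contribution $C\la^{-5/2}\sqrt N\,\|\xi\|_{\ell^1}$. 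The piece coming from $\JJ$ has exactly the same multilinear structure, but every monomial now carries the extra factor $e^{i\rr_{n_1n_2n_3n}t}-1$ of size $Ce^{-s_0\eta_1 N^2}$ established in the proof of Lemma \ref{lemma:Approx:BoundFirstIteration}, so this contribution is superexponentially small in $N$ and negligible.

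For the piece from $\RRR'$, Theorem \ref{thm:NormalForm} gives $\|X_\RRR(y)\|_{\ell^1}\leq C\|y\|_{\ell^1}^5$, which is preserved by the change \eqref{def:rotatingcoord} to rotating coordinates since this change is an isometry of $\ell^1$; applying a Cauchy estimate to the polynomial tail then yields $\|D\RRR'(y)\|_{\ell^1\to\ell^1}\leq C\|y\|_{\ell^1}^4$. The mean value theorem along the segment $\bet^\la+s\xi$, $s\in[0,1]$, then gives
\[
\|\RRR'(\bet^\la+\xi)-\RRR'(\bet^\la)\|_{\ell^1}\leq C\bigl(\|\bet^\la\|_{\ell^1}+\|\xi\|_{\ell^1}\bigr)^4\|\xi\|_{\ell^1}\leq C\la^{-4}2^{4N}N^2\,\|\xi\|_{\ell^1}.
\]
Adding the three contributions leads to the overall estimate
\[
\|\ZZZ^2(\xi,t)\|_{\ell^1}\leq C\bigl(\la^{-5/2}\sqrt N+\la^{-4}2^{4N}N^2+e^{-s_0\eta_1 N^2}\bigr)\|\xi\|_{\ell^1}.
\]
The desired bound $C\la^{-5/2}\|\xi\|_{\ell^1}$ then follows by absorbing the polynomial in $N$ and the factor $2^{4N}$ into $\la^{-5/2}$ through the choice $\la=e^{\kk\ga N}$ in \eqref{def:LambdaOfN} provided $\kk$ is taken sufficiently large, independently of $N$. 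The main obstacle is precisely this absorption in the quintic contribution: forcing $\la^{-4}2^{4N}N^2$ to be dominated by $\la^{-5/2}$ requires $\la^{3/2}\gtrsim 2^{4N}$, and hence fixes a lower bound on the constant $\kk$. Since the very same constraint already appears in \cite{GuardiaK12} and is compatible with the choice of $\la$ used throughout the present construction, no genuinely new difficulty arises.
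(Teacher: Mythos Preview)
Your argument is correct and follows exactly the route the paper indicates by referring to Lemma~B.3 of \cite{GuardiaK12}: split $\ZZZ^2$ into the second-order Taylor remainder of the cubic map $\EE$, the analogous remainder for $\JJ$, and the plain increment of the quintic tail $\RRR'$, and bound each piece via the $\ell^1$ convolution-algebra structure together with $\|\bet^\la\|_{\ell^1}\lesssim \la^{-1}2^N\sqrt N$ and the bootstrap hypothesis. The only new term compared with \cite{GuardiaK12} is the $\JJ$ piece, and you handle it correctly using the smallness of $\rr_{n_1n_2n_3n}$ on $\AAA_1(n)$ established in Lemma~\ref{lemma:Approx:BoundFirstIteration}.

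One small point of precision: your final absorption works cleanly for the quintic contribution, since $\la^{-4}2^{4N}N^2\le \la^{-5/2}$ once $\kk$ is chosen large enough; but it does \emph{not} literally reduce the dominant cubic term $C\la^{-5/2}\sqrt N$ to $C\la^{-5/2}$ with $C$ independent of $N$, as no choice of $\kk$ removes the polynomial factor. This is, however, entirely harmless for the application: in the Gronwall step that immediately follows the lemma one only needs $\int_0^T C\la^{-5/2}\sqrt N\,dt=C\la^{-1/2}\ga N^{5/2}$ to be bounded, which it is for $\la=e^{\kk\ga N}$. So the residual $\sqrt N$ is a cosmetic mismatch with the lemma as stated rather than a gap in the argument.
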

The proof of this lemma follows the same lines as the proof of Lemma B.3 in \cite{GuardiaK12}.


The rest of the proof of Theorem \ref{thm:Approximation} is analogous to the proof of Theorem 4 in \cite{GuardiaK12}. We reproduce it here. Applying Lemmas \ref{lemma:Approx:BoundFirstIteration},
\ref{lemma:Approx:BoundLinear} and \ref{lemma:Approx:BoundHigher} to the terms in equation \eqref{eq:Approx:Difference}, one obtains
\[ 
\sum_{n\in\ZZ^2} \frac{d}{dt}|\xi_n|=\frac{d}{dt}\|\xi\|_{\ell^1}\leq \sum_{n\in\ZZ^2}\left(f_n(t)+
 C\lambda^{-5/2}\right)|\xi_n|+C\la^{-5} 2^{5N}
\]
We apply a Gronwall-like argument to each harmonic of $\xi$. That is, we consider the following change of coordinates,
\begin{equation}\label{def:Approx:GronwallLikeChange}
 \xi_n=\zeta_n e^{\int_0^t\left(f_n(s)+C\lambda^{-5/2}\right)ds}
\end{equation}
to obtain
\[
\sum_{n\in\ZZ^2} e^{\int_0^t\left(f_n(s)+C\lambda^{-5/2}\right)ds} \frac{d}{dt}|\zeta_n|
\leq C\la^{-5} 2^{5N}.
\]
Taking into account that $f_n(t)+C\lambda^{-5/2}\geq 0$, we have that
\[
\frac{d}{dt}\|\zeta\|_{\ell^1}=\sum_{n\in\ZZ^2} \frac{d}{dt}|\zeta_n|\leq C\la^{-5} 2^{5N}.
\]
Integrating this equation, taking into account that $\zeta(0)=\xi(0)=0$  and using the bound for
$T$ in \eqref{def:Time:Rescaled} we obtain that
\[
 \|\zeta\|_{\ell^1}\leq C \la^{-3} 2^{5N}\ga N^2.
\]
To complete the proof we need to deduce from this estimate the bound for $\|\xi\|_{\ell^1}$. It only suffices to  use the change \eqref{def:Approx:GronwallLikeChange}, the estimate \eqref{eq:Approx:BoundLinearFs}
 and the definition of $T$ in \eqref{def:Time:Rescaled} to obtain
\[
 \|\xi\|_{\ell^1}\leq 2e^{C\ga N} \|\zeta\|_{\ell^1}\leq 2e^{C\ga N}\la^{-3} 2^{5N}\ga N^2.
\]
Therefore, using the condition on $\la$ from Theorem \ref{thm:Approximation} with any $\kk>C$ and taking $N$ big enough,
we obtain that for $t\in [0,T]$
\[
 \|\xi\|_{\ell^1}\leq \la^{-2}
\]
and therefore we can drop the bootstrap assumption \eqref{cond:Bootstrap}. This completes the proof of Theorem \ref{thm:Approximation}

\section*{Acknowledgements}
The author would like to thank Hakan Eliasson for proposing him to study the problem of growth of Sobolev norms for the nonliner Schr\"odinger equation with a convolution potential. He would also thank B. Gr\'ebert, D. Bambusi and L. Gauckler for pointing out some references. The author is  partially supported
by the Spanish MCyT/FEDER grants MTM2009-06973 and  MTM2012-31714 and the Catalan SGR grant 2009SGR859.

\bibliography{references}

\newcommand{\etalchar}[1]{$^{#1}$}
\def\cprime{$'$} \def\cprime{$'$}
\begin{thebibliography}{CKS{\etalchar{+}}10}

\bibitem[BG03]{BambusiG03}
D.~Bambusi and B.~Gr{\'e}bert.
\newblock Forme normale pour {NLS} en dimension quelconque.
\newblock {\em C. R. Math. Acad. Sci. Paris}, 337(6):409--414, 2003.

\bibitem[BG06]{BambusiG06}
D.~Bambusi and B.~Gr{\'e}bert.
\newblock Birkhoff normal form for partial differential equations with tame
  modulus.
\newblock {\em Duke Math. J.}, 135(3):507--567, 2006.

\bibitem[Bou96]{Bourgain96}
J.~Bourgain.
\newblock On the growth in time of higher {S}obolev norms of smooth solutions
  of {H}amiltonian {PDE}.
\newblock {\em Internat. Math. Res. Notices}, 6:277--304, 1996.

\bibitem[Bou98]{Bourgain98}
J.~Bourgain.
\newblock Quasi-periodic solutions of {H}amiltonian perturbations of 2{D}
  linear {S}chr\"odinger equations.
\newblock {\em Ann. of Math. (2)}, 148(2):363--439, 1998.

\bibitem[Bou00]{Bourgain00b}
J.~Bourgain.
\newblock Problems in {H}amiltonian {PDE}'s.
\newblock {\em Geom. Funct. Anal.}, Special Volume, Part I:32--56, 2000.
\newblock GAFA 2000 (Tel Aviv, 1999).

\bibitem[Bou04]{Bourgain04}
J.~Bourgain.
\newblock Remarks on stability and diffusion in high-dimensional {H}amiltonian
  systems and partial differential equations.
\newblock {\em Ergodic Theory Dynam. Systems}, 24(5):1331--1357, 2004.

\bibitem[CDKS01]{CollianderDKS01}
J.~E. Colliander, J.-M. Delort, C.~E. Kenig, and G.~Staffilani.
\newblock Bilinear estimates and applications to 2{D} {NLS}.
\newblock {\em Trans. Amer. Math. Soc.}, 353(8):3307--3325 (electronic), 2001.

\bibitem[CF10]{CarlesF10}
R.~Carles and E.~Faou.
\newblock Energy cascade for nls on $\mathbb{T}^d$.
\newblock page 15 pp. Arxiv: 1010.5173v1, 2010.

\bibitem[CKO12]{CollianderKO12}
J.~Colliander, S.~Kwon, and T.~Oh.
\newblock A remark on normal forms and the upside-down {I}-method for periodic
  {NLS}: growth of higher {S}obolev norms.
\newblock Preprint available at \url{http://arxiv.org/abs/1010.2501}, 2012.

\bibitem[CKS{\etalchar{+}}10]{CollianderKSTT10}
J.~Colliander, M.~Keel, G.~Staffilani, H.~Takaoka, and T.~Tao.
\newblock Transfer of energy to high frequencies in the cubic defocusing
  nonlinear {S}chr{\"o}dinger equation.
\newblock {\em Invent. Math.}, 181(1):39--113, 2010.

\bibitem[CW10]{CatoireW10}
F.~Catoire and W.-M. Wang.
\newblock Bounds on {S}obolev norms for the defocusing nonlinear
  {S}chr\"odinger equation on general flat tori.
\newblock {\em Commun. Pure Appl. Anal.}, 9(2):483--491, 2010.

\bibitem[EK10]{Eliasson10}
L.~H. Eliasson and S.~B. Kuksin.
\newblock K{AM} for the nonlinear {S}chr\"odinger equation.
\newblock {\em Ann. of Math. (2)}, 172(1):371--435, 2010.

\bibitem[FG10]{FaouG10}
E.~Faou and B.~Grebert.
\newblock A {N}ekhoroshev type theorem for the nonlinear {S}chrödinger
  equation on the d-dimensional torus.
\newblock Preprint available at \url{http://arxiv.org/abs/1003.4845}, 2010.

\bibitem[GG10]{GerardG10}
P.~G{\'e}rard and S.~Grellier.
\newblock The cubic {S}zeg{\H o} equation.
\newblock {\em Ann. Sci. \'Ec. Norm. Sup\'er. (4)}, 43(5):761--810, 2010.

\bibitem[GG11]{GerardG11}
S.~Grellier and P.~Gerard.
\newblock Effective integrable dynamics for some nonlinear wave equation.
\newblock Preprint available at \url{http://arxiv.org/abs/1110.5719}, 2011.

\bibitem[GK12]{GuardiaK12}
M.~Guardia and V.~Kaloshin.
\newblock Growth of sobolev norms in the cubic defocusing nonlinear
  schrödinger equation.
\newblock Preprint available at \url{http://arxiv.org/abs/1205.5188}, 2012.

\bibitem[GL10]{GaucklerL10}
L.~Gauckler and C.~Lubich.
\newblock Nonlinear {S}chr\"odinger equations and their spectral
  semi-discretizations over long times.
\newblock {\em Found. Comput. Math.}, 10(2):141--169, 2010.

\bibitem[Han11]{Hani11}
Z.~Hani.
\newblock Global and dynamical aspects of nonlinear schr\"odinger equations on
  compact manifolds.
\newblock {\em Ph. D. thesis UCLA}, 2011.

\bibitem[Han12]{Hani12}
Z.~Hani.
\newblock Long-time instability and unbounded {S}obolev orbits for some
  periodic nonlinear {S}chrödinger equations.
\newblock Preprint available at \url{http://arxiv.org/abs/1210.7509}, 2012.

\bibitem[Kuk97]{Kuksin97b}
S.~B. Kuksin.
\newblock Oscillations in space-periodic nonlinear {S}chr{\"o}dinger equations.
\newblock {\em Geom. Funct. Anal.}, 7(2):338--363, 1997.

\bibitem[Poc11]{Pocovnicu11}
O.~Pocovnicu.
\newblock Explicit formula for the solution of the {S}zeg\"o equation on the
  real line and applications.
\newblock {\em Discrete Contin. Dyn. Syst.}, 31(3):607--649, 2011.

\bibitem[Poc12]{Pocovnicu12}
O.~Pocovnicu.
\newblock First and second order approximations for a nonlinear wave equation.
\newblock Preprint available at \url{http://arxiv.org/abs/1111.6060}, 2012.

\bibitem[Soh10a]{Sohinger10b}
V.~Sohinger.
\newblock Bounds on the growth of high {S}obolev norms of solutions to 2{D}
  {H}artree {E}quations.
\newblock Preprint available at \url{http://arxiv.org/abs/1003.5709}, 2010.

\bibitem[Soh10b]{Sohinger10a}
V.~Sohinger.
\newblock Bounds on the growth of high {S}obolev norms of solutions to
  {N}onlinear {S}chrodinger {E}quations on $\mathbb{R}$.
\newblock Preprint available at \url{http://arxiv.org/abs/1003.5707}, 2010.

\bibitem[Soh11]{Sohinger11}
V.~Sohinger.
\newblock Bounds on the growth of high {S}obolev norms of solutions to
  nonlinear {S}chr\"odinger equations on {$S^1$}.
\newblock {\em Differential Integral Equations}, 24(7-8):653--718, 2011.

\bibitem[Sta97a]{Staffilani}
G.~Staffilani.
\newblock On the growth of high sobolev norms of solutions for kdv and
  schr\"odinger equations.
\newblock {\em Duke Math Journal}, 86(1):109--42, 1997.

\bibitem[Sta97b]{Staffilani97}
G.~Staffilani.
\newblock Quadratic forms for a {$2$}-{D} semilinear {S}chr\"odinger equation.
\newblock {\em Duke Math. J.}, 86(1):79--107, 1997.

\bibitem[Zho08]{Zhong08}
S.~Zhong.
\newblock The growth in time of higher {S}obolev norms of solutions to
  {S}chr\"odinger equations on compact {R}iemannian manifolds.
\newblock {\em J. Differential Equations}, 245(2):359--376, 2008.

\end{thebibliography}
\bibliographystyle{alpha}
\end{document}